\def\ps@pprintTitle{%
 \let\@oddhead\@empty
 \let\@evenhead\@empty
 \def\@oddfoot{}%
 \let\@evenfoot\@oddfoot}
\DeclareRobustCommand
\newtheorem{theorem}{Theorem}[section]
\newtheorem{corollary}[theorem]{Corollary}
\newtheorem{proposition}[theorem]{Proposition}
\newtheorem{definition}[theorem]{Definition}
\newenvironment{proof}{\emph{Proof.}}{\hfill $\Box$ \\}
\newcommand{\comment}[1]{}
\begin{document}

\emergencystretch 3em

\begin{frontmatter}

\title{Fractional hypergraph isomorphism and fractional invariants}

\author{Flavia Bonomo-Braberman}
\address{Universidad de Buenos Aires. Facultad de Ciencias Exactas y Naturales. Departamento de Computaci\'on. / CONICET-Universidad de Buenos Aires. Instituto de
Investigaci\'on en Ciencias de la Computaci\'on (ICC). Buenos
Aires, Argentina.} \ead{fbonomo@dc.uba.ar}

\author{Dora Tilli}
\address{Universidad de Buenos Aires. Ciclo B\'asico Com\'un. \'Area de Matem\'atica. / Facultad de Ingenier\'{\i}a. Departamento de Matem\'atica. Buenos Aires,
Argentina.} \ead{dora.tilli@gmail.com}

\begin{abstract}
Fractional graph isomorphism is the linear relaxation of an
integer programming formulation of graph isomorphism. It preserves
some invariants of graphs, like degree sequences and equitable
partitions, but it does not preserve others like connectivity,
clique and independence numbers, chromatic number, vertex and edge
cover numbers, matching number, domination and total domination
numbers.

In this work, we extend the concept of fractional graph
isomorphism to hypergraphs, and give an alternative
characterization, analogous to one of those that are known for
graphs. With this new concept we prove that the fractional
packing,  covering, matching and transversal numbers on
hypergraphs are invariant under fractional hypergraph isomorphism.
As a consequence, fractional matching, vertex and edge cover,
independence, domination and total domination numbers are
invariant under fractional graph isomorphism. This is not the case
of fractional chromatic, clique, and clique cover numbers. In this
way, most of the classical fractional parameters are classified
with respect to their invariance under fractional graph
isomorphism.
\end{abstract}

\begin{keyword}
fractional isomorphism, fractional graph theory, fractional
covering, fractional matching, hypergraphs.
\end{keyword}

\end{frontmatter}

\section{Introduction}\label{sec:introduction}

Graphs $G$ and $H$ are isomorphic, denoted $G \cong H$, when there
is a bijection $\phi : V(G) \to V(H)$ so that $uv \in E(G)$ if and
only if $\phi(u)\phi(v) \in E(H)$. In other words, graphs $G$ and
$H$ are isomorphic if they differ only in the names of their
vertices. In terms of matrices, if $A$ and $B$ are the adjacency
matrices of $G$ and $H$, then $G \cong H$ if and only if there is
a \emph{permutation matrix} $P$ so that $A = PBP^{-1}$. The
relation $A = PBP^{-1}$ can be rewritten as $AP = PB$, and the
requirement that $P$ is a permutation matrix can be restated as
``$P \cdot \textbf{1} = \textbf{1}$, $P^t \cdot \textbf{1} =
\textbf{1}$, and the entries of $P$ are in $\{0, 1\}$'', where
$P^t$ denotes the transposed matrix of $P$ and $\textbf{1}$ stands
for a vector of all $1$'s. So, the graph isomorphism problem can
be viewed as an integer programming feasibility problem, where $A$
and $B$ are given and the unknowns are the coefficients of matrix
$P$.

In~\cite{R-S-U-FGI}, Ramana, Scheinerman, and Ullman consider a
linear relaxation of the integer programming formulation and
denote the concept by \emph{fractional isomorphism} of two graphs.
Namely, they drop the requirement that $P$ is a $\{0,1\}$-matrix
and simply require the entries in $P$ to be nonnegative. A matrix
$S$ whose entries are nonnegative, and whose rows and columns all
sum up to 1 (i.e., $S \cdot \textbf{1} = \textbf{1}$ and $S^t
\cdot \textbf{1} = \textbf{1}$) is called a \emph{doubly
stochastic matrix}. Graphs $G$ and $H$ are said to be
\emph{fractionally isomorphic}, $G \cong_f H$, provided there is a
doubly stochastic matrix $S$ for which $AS = SB$ where $A$ and $B$
are the adjacency matrices of the graphs $G$ and $H$,
respectively.

The concept of fractional isomorphism fits within the more general
concept of fractional graph theory, surveyed by Ullman and
Scheinerman in~\cite{U-S-fgt}, in which fractional relaxations of
classical (integer) combinatorial optimization problems are
studied. Some of the notions involved in the main results about
fractional isomorphism are already present in the work of
Brualdi~\cite{Bru-doubsto}, Godsil~\cite{Gods-fgi},
Leighton~\cite{Lei-fgi}, McKay~\cite{McKay-PGI},
Mowshowitz~\cite{Mow-fgi}, and Tinhofer~\cite{Tin-fgi}, under
different names, mainly with the aim of having tools to
efficiently reject some instances of the isomorphism problem,
whose computational complexity is still open. The current best
result is a quasipolynomial time algorithm by
Babai~\cite{Babai-iso}. The color refinement procedure, introduced
in 1968 by Weisfeiler and Lehman~\cite{W-L-isocolor}, was also
related recently to fractional isomorphism
in~\cite{A-K-R-V-col-iso}.

It has been proved that the fractional isomorphism is an
equivalence relation that generalizes isomorphism, and preserves
some invariants of graphs as the number of vertices and edges, and
the degree sequence. Indeed, any two $r$-regular graphs on $n$
vertices are fractionally isomorphic. In particular, the disjoint
union of two triangles and a cycle of length six are fractionally
isomorphic. So, properties like connectivity, clique and
independence numbers, chromatic number, vertex, edge, and clique
cover numbers, and matching number, are not preserved. There are
also similar examples to show that neither are domination and
total domination numbers.

In this work, we extend the concept of fractional graph
isomorphism to hypergraphs, and give an alternative
characterization, analogous to one of those that are known for
graphs. With this new concept we prove that the fractional
packing,  covering, matching and transversal numbers on
hypergraphs are invariant under fractional hypergraph isomorphism.
As a consequence, fractional matching, vertex and edge cover,
independence, domination and total domination numbers are
invariant under fractional graph isomorphism. This is not the case
of fractional chromatic, clique, and clique cover numbers. In this
way, most of the classical fractional parameters are classified
with respect to their invariance under fractional graph
isomorphism.

\section{Definitions and basic results}\label{sec:defs}

\subsection{Graph theory}

All graphs in this work are finite, undirected, and have no loops
or multiple edges. For all graph-theoretic notions and notation
not defined here, we refer to West~\cite{West}. Let $G$ be a
graph. Denote by $V(G)$ its vertex set, and by $E(G)$ its edge
set. In general, we will assume $n = |V(G)|$ and $m = |E(G)|$.

Denote by $N(v)$ the neighborhood of a vertex $v$ in $G$, and by
$N[v]$ the closed neighborhood $N(v)\cup\{v\}$. If $X \subseteq
V(G)$, denote by $N(X)$ the set of vertices not in $X$ having at
least one neighbor in $X$. A vertex $v$ of $G$ is \emph{universal}
(resp. \emph{isolated}) if $N[v] = V(G)$ (resp. $N(v) =
\emptyset$).

Denote by $d(v)$ the degree of a vertex $v$ of $G$, i.e., $d(v)=
|N(v)|$, and by $\Delta(G)$ (resp. $\delta(G)$) the maximum (resp.
minimum) degree of a vertex in $G$. Let $S$ be a subset of the
vertex set of a graph $G$. Let $d(v,S)$ denote the degree of $v$
in $S$, i.e., $d(v, S) = |N(v) \cap S|$.

The \emph{degree sequence of a graph} $G$ is the multiset of the
degrees of its vertices. A graph is \emph{$k$-regular} if every
vertex has degree $k$, and \emph{regular} if it is $k$-regular for
some $k$.

The \emph{adjacency matrix} of a graph $G$ with vertices $v_1,
\dots, v_n$ is a matrix $A_G \in \{0,1\}^{n\times n}$ such that
$A_G(i,j) = 1$ if $v_iv_j \in E(G)$ and $A_G(i,j) = 0$, otherwise.
If the edges of $G$ are numbered as $e_1, \dots, e_m$, $m \geq 1$,
the \emph{vertex-edge incidence matrix} of $G$ is a matrix $M_G
\in \{0,1\}^{n\times m}$ such that $M_G(i,j)=1$ if $v_i$ is one of
the endpoints of $e_j$, and $M_G(i,j)=0$, otherwise. We will
denote by $J_{n}$ (resp. $J_{n\times m}$) the $(n\times n)$-matrix
(resp. $(n\times m)$-matrix) with all its entries equal $1$.

Given a graph $G$ and $W\subseteq V(G)$, denote by $G[W]$ the
subgraph of $G$ induced by $W$. Denote the size of a set $S$ by
$|S|$.

Denote by $C_n$ a chordless cycle on $n$ vertices.

A \emph{clique} or \emph{complete set} (resp.\ \emph{stable set}
or \emph{independent set}) is a set of pairwise adjacent (resp.\
non-adjacent) vertices. The size of a maximum size clique in a
graph $G$ is called the \emph{clique number} and denoted by
$\omega(G)$. The size of a maximum size independent set in a graph
$G$ is called the \emph{independence number} and denoted by
$\alpha(G)$.

A \emph{matching} of a graph is a set of pairwise disjoint edges
(i.e., no two edges share an endpoint). The maximum number of
edges of a matching in a graph $G$ is called the \emph{matching
number} and denoted by $\mu(G)$.

A \emph{vertex cover} is a set $S$ of vertices of a graph $G$ such
that each edge of $G$ has at least one endpoint in $S$.
Analogously, an \emph{edge cover} is a set $F$ of edges of a graph
$G$ such that each  vertex of $G$ belongs to at least one edge of
$F$. Denote by $\tau(G)$ (resp. $k(G)$) the size of a minimum
vertex (resp. edge) cover of a graph $G$, called the \emph{vertex
(edge) cover number} of $G$.

A \emph{clique cover} is a set $F$ of cliques of a graph $G$ such
that each vertex of $G$ belongs to at least one clique of $F$.
Denote by $\theta(G)$ the size of a minimum clique cover of a
graph $G$, called the \emph{clique cover number} of $G$.

A \emph{dominating set} in a graph $G$ is a set of vertices $S$
such that every vertex in $V(G)$ is either in $S$ or adjacent to a
vertex in $S$. The \emph{domination number} $\gamma(G)$ of a graph
$G$ is the size of a smallest dominating set. A \emph{total
dominating set} in a graph $G$ is a set of vertices $S$ such that
every vertex in $V(G)$ is adjacent to a vertex in $S$. The
\emph{total domination number} $\Gamma(G)$ of a graph $G$ is the
size of a smallest total dominating set.

A \emph{coloring} of a graph is an assignment of colors to its
vertices such that any two adjacent vertices are assigned
different colors. The smallest number $t$ such that $G$ admits
coloring with $t$ colors (a \emph{$t$-coloring}) is called the
\emph{chromatic number} of $G$ and is denoted by $\chi(G)$. A
coloring defines a partition of the vertices of the graph into
stable sets, called \emph{color classes}.

A graph is \emph{bipartite} if it admits a $2$-coloring, and a
\emph{bipartition} of it is a partition $(A,B)$ of its vertex set
into two stable sets. A bipartite graph $(A \cup B, E)$ is called
\emph{biregular} if the vertices of $A$ have the same degree $a$
and the vertices of $B$ have the same degree $b$ (where not
necessarily $a=b$). If we want to make explicit these values, we
write \emph{$(a,b)$-regular}. Given a graph $G$ and two disjoint
subsets $A$, $B$ of $V(G)$, the bipartite graph $G[A,B]$ is
defined as the subgraph of $G$ formed by the vertices $A \cup B$
and the edges of $G$ that have one endpoint in $A$ and one in $B$.
Notice that $G[A,B]$ is not necessarily an induced subgraph of
$G$.

\subsection{Matrix theory}

We will state here some well known definitions and results from
matrix theory which we need in this paper, and can be found, for
example, in~\cite{H-J-matrix}.

Every doubly stochastic matrix $S$ can be written as a convex
combination of permutation matrices, i.e., $S= \sum_{i \in I}
\alpha_i P_i$, where $\sum \alpha_i = 1$, the $\alpha_i$'s are
positive and each $P_i$ is a permutation matrix. This convex
combination is known as a \emph{Birkhoff decomposition} of $S$.

Let $A$ and $B$ be square matrices. The \emph{direct sum} of $A$
and $B$ is the square matrix

\begin{equation*}
A\oplus B=%
\begin{bmatrix}
A & 0 \\
1

0 & B%
\end{bmatrix}%
.
\end{equation*}

If $M=A\oplus B$ we say $M$ is \emph{decomposable}. In general,
$M$ is decomposable if there exists $A$, $B$, $P$ and $Q$ such
that $P$ and $Q$ are permutation matrices and $M=P(A\oplus B)Q$.
If no such decomposition exists, we say that $M$ is
\emph{indecomposable}.

Let $M$ be a $n\times n$ matrix. Let $D(M)$ be a digraph on $n$
vertices $v_{1},\dots,v_{n}$ with an arc from $v_{i}$ to $v_{j}$
if $M_{ij}\neq 0$. We say that $M$ is \emph{irreducible} when
$D(M)$ is strongly connected. Otherwise, we say that $M$ is
\emph{reducible}. Furthermore, we say that a matrix $M$ is
\emph{strongly irreducible} provided $PM$ is irreducible for any
permutation matrix $P$.

\begin{proposition}\label{lem:strong}\cite{U-S-fgt}
Let $S$ be a doubly stochastic, indecomposable matrix. Then $S$ is
also strongly irreducible.
\end{proposition}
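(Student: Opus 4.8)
The plan is to prove the contrapositive: if $PS$ is reducible for some permutation matrix $P$, then $S$ is decomposable. The starting observation is that permutation matrices are doubly stochastic and products of doubly stochastic matrices are doubly stochastic, so $PS$ is itself doubly stochastic. Thus it suffices to show that a \emph{doubly stochastic} reducible matrix is always decomposable, and then pull the decomposition back through $P$.

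So suppose $T := PS$ is doubly stochastic and reducible, i.e., $D(T)$ is not strongly connected. First I would translate this into a block form: choosing a vertex from which not every vertex is reachable and letting $X$ be the set of vertices reachable from it, $X$ is a nonempty proper subset of $\{v_1,\dots,v_n\}$ such that no arc of $D(T)$ leaves $X$. Reindexing so that $X = \{v_1,\dots,v_k\}$ with $1 \le k \le n-1$, i.e., conjugating $T$ by a suitable permutation matrix $R$ to get $T' := RTR^{t}$, the absence of arcs from $X$ to its complement means that $T'$ has the block-triangular shape $T' = \begin{bmatrix} C & 0 \\ D & E \end{bmatrix}$ with $C$ of size $k\times k$.

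The key step is to show $D = 0$. Since $T'$ is doubly stochastic, each of its first $k$ rows sums to $1$, and these rows are supported in the first $k$ columns; hence the entries of $C$ sum to $k$. On the other hand, each of the first $k$ columns of $T'$ sums to $1$, so the entries of $C$ sum to at most $k$, with equality only if each of the first $k$ columns of $T'$ has all of its mass inside $C$, i.e., $D = 0$ (the entries being nonnegative). Therefore $T' = C \oplus E$. Consequently $T = R^{t}(C\oplus E)R$, and since $S = P^{-1}T = P^{t}R^{t}(C\oplus E)R$ with $P^{t}R^{t}$ and $R$ permutation matrices and $C$, $E$ square, $S$ is decomposable, completing the contrapositive.

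The only mild subtlety is the translation between the strong-connectivity formulation of (ir)reducibility and the block-triangular normal form; once that bookkeeping is set up, the heart of the argument is the short counting argument that forces the off-diagonal block $D$ to vanish, which crucially uses the fact that $PS$ inherits double stochasticity.
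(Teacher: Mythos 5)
Your proof is correct. Note that the paper itself does not prove this proposition; it is quoted from the cited reference (Scheinerman and Ullman's book on fractional graph theory), so there is no in-paper argument to compare against. Your argument is essentially the standard one: pass to the contrapositive, observe that $PS$ is still doubly stochastic, put the reducible matrix into block lower-triangular form $\begin{bmatrix} C & 0 \\ D & E \end{bmatrix}$ via a simultaneous row/column permutation, and use the row-sum/column-sum counting argument to force $D=0$, which turns the triangular form into a genuine direct sum and hence exhibits $S=P^{t}R^{t}(C\oplus E)R$ as decomposable. All the steps check out against the paper's definitions (in particular, both diagonal blocks are square and nonempty since $1\le k\le n-1$, as the definition of decomposability requires), and the key use of double stochasticity is exactly where it is needed.
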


\begin{theorem}\label{t1.8}\cite{H-L-P-convex}
Let $S$, $R$ be two doubly stochastic matrices of dimensions
$n\times n$ with Birkhoff's decomposition $S=\sum \alpha
_{i}P_{i}$ and $R=\sum \beta _{j}Q_{j}$, respectively. Let $x$,
$y$ be vectors of length $n$
\begin{enumerate}[(1)]
\item If $y=Sx$ and $x=Ry$, then $y=P_{i}x$ and $x=Q_{j}y$ for
every $i,j$.

\item Let $x$, $y$ as in (1). If, in addition, either $S$ or $R$
is indecomposable, then $x=y=s\cdot \bf{1}$ for some scalar $s$.

\item If $x$ and $y$ are $\{0,1\}$-vectors and $y=Sx$, then
$y=P_{i}x$ for every $i$.
\end{enumerate}
\end{theorem}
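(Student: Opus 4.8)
The plan is to prove part (3) first, since it is essentially immediate, then part (1) by induction on $n$ via a ``peel off the top layer'' argument, and finally to derive part (2) from part (1) together with Proposition~\ref{lem:strong} and the Perron--Frobenius theorem. I will write $\pi_i$ (resp.\ $\rho_j$) for the permutation whose matrix is $P_i$ (resp.\ $Q_j$), so that $(P_ix)_p = x_{\pi_i(p)}$, and I will use repeatedly the elementary fact that a convex combination with strictly positive coefficients of real numbers attains their maximum (or minimum) only when all of them are equal.

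For part (3): from $y = Sx$ one has $y_p = \sum_i\alpha_i x_{\pi_i(p)}$ for each $p$, a convex combination of numbers in $\{0,1\}$ equal to $y_p\in\{0,1\}$; by the observation above, every $x_{\pi_i(p)}$ equals $y_p$. Hence $(P_ix)_p = y_p$ for all $p$ and $i$, i.e.\ $y=P_ix$.

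For part (1): first, since $S$ and $R$ are row-stochastic, $\max_k y_k\le\max_k x_k$ and $\max_k x_k\le\max_k y_k$, so these maxima coincide; call the common value $M$ and set $A=\{k:x_k=M\}$, $B=\{p:y_p=M\}$, both nonempty. For $p\in B$, writing $M=y_p=\sum_i\alpha_i x_{\pi_i(p)}$ forces $\pi_i(p)\in A$ for every $i$, so $\pi_i(B)\subseteq A$ and $|B|\le|A|$; symmetrically $\rho_j(A)\subseteq B$ and $|A|\le|B|$. Thus $|A|=|B|$, each $\pi_i$ carries $B$ bijectively onto $A$ (hence $B^c$ onto $A^c$), and each $\rho_j$ carries $A$ bijectively onto $B$ (hence $A^c$ onto $B^c$). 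On the top layer this is already enough: for $p\in B$, $(P_ix)_p=x_{\pi_i(p)}=M=y_p$, and dually for $Q_j$ on $A$. For the remaining coordinates I would pass to the submatrix $S'$ of $S$ with rows in $B^c$ and columns in $A^c$; since $\pi_i^{-1}(A)=B$, every row of $S'$ still sums to $1$, and likewise every column, so (after relabelling) $S'$ is doubly stochastic and each $P_i$ restricts to a permutation matrix $P'_i$ on these index sets. With $x'=x|_{A^c}$, $y'=y|_{B^c}$ and the analogous $R'$ one gets $y'=S'x'$ and $x'=R'y'$, an instance of the statement in dimension $|A^c|<n$ (the case $A=\{1,\dots,n\}$ being trivial, as then $x=M\,\mathbf{1}=y$). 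Induction gives $y'_p=x'_{\pi_i(p)}$ for all $p\in B^c$, which combined with the top layer yields $y=P_ix$, and symmetrically $x=Q_jy$.

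For part (2): by part (1), $y=P_1x$, so $P_1^{-1}Sx=P_1^{-1}y=x$ and $x$ is a right eigenvector for the eigenvalue $1$ of the doubly stochastic matrix $T:=P_1^{-1}S$. If $S$ is indecomposable, Proposition~\ref{lem:strong} makes it strongly irreducible, so $T$ is irreducible; an irreducible stochastic matrix has $1$ as a simple eigenvalue (Perron--Frobenius) with eigenvector $\mathbf{1}$, whence $\ker(T-I)=\mathrm{span}(\mathbf{1})$ and $x=s\,\mathbf{1}$ for some scalar $s$; then $y=Sx=s\,\mathbf{1}=x$. If instead $R$ is indecomposable, argue identically with $x=Ry$ and $Q_1^{-1}R$. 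I expect the only real obstacle to be the reduction step in part (1): one has to check carefully that discarding the rows indexed by $B$ and the columns indexed by $A$ leaves a doubly stochastic matrix, which is exactly what the equalities $\pi_i(B)=A$ and $\rho_j(A)=B$ (obtained from the two inclusions together with $|A|=|B|$) provide; the rest is routine once the convex-combination principle stated at the outset is in hand.
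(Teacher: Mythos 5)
The paper does not prove this statement: it is quoted as a known result with a citation to Hardy--Littlewood--P\'olya, so there is no in-paper argument to compare yours against. Your proof is correct as a self-contained argument, and all three parts check out. In part (3) the convex-combination principle applied coordinatewise is exactly right. In part (1) the delicate point is the one you flagged: that deleting the rows indexed by $B$ and the columns indexed by $A$ leaves a doubly stochastic matrix with Birkhoff decomposition $\sum_i \alpha_i P_i'$, where each $P_i'$ is the restriction of $P_i$; this follows from $\pi_i(B)=A$ (hence $\pi_i(B^c)=A^c$), which you correctly derive from the two inclusions plus $|A|=|B|$, and the identities $y'=S'x'$, $x'=R'y'$ hold because the discarded entries of $S$ and $R$ in those rows are forced to vanish. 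The induction terminates since $A\neq\emptyset$. In part (2), $Sx=P_1x$ gives $P_1^{t}Sx=x$, strong irreducibility (Proposition~\ref{lem:strong}) makes $P_1^{t}S$ irreducible, and Perron--Frobenius gives $\ker(P_1^{t}S-I)=\mathrm{span}(\mathbf{1})$; all correct. For comparison, the classical route to part (1) (and the one closest in spirit to the cited source) is shorter: $\|Sx\|_2\le\|x\|_2$ for any doubly stochastic $S$, so $y=Sx$ and $x=Ry$ force $\|x\|_2=\|y\|_2=\|P_ix\|_2$ for all $i$, and equality in $\|\sum_i\alpha_iP_ix\|_2\le\sum_i\alpha_i\|P_ix\|_2$ among vectors of equal norm forces all $P_ix$ to coincide, hence to equal $y$. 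That argument trades your elementary layer-peeling induction for one appeal to the equality case of the triangle inequality; both are valid, and yours has the advantage of using nothing beyond the positivity of the $\alpha_i$.
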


\subsection{Main results on fractional isomorphism}

We will survey here the main definitions and results on fractional
isomorphism, as they are stated in~\cite{R-S-U-FGI,U-S-fgt}. Some
of them were partially and independently shown
in~\cite{Bru-doubsto,Gods-fgi,Lei-fgi,McKay-PGI,Mow-fgi,Tin-fgi},
with different notations.

\begin{definition}
Let $G$, $H$ be graphs and $A_G$, $A_H$ their adjacency matrices,
respectively. We say that $G$ and $H$ are fractionally isomorphic,
and we write $G\cong_f H$, if there exists a doubly stochastic
matrix $S$ such that $A_G S=S A_H$.
\end{definition}

\begin{proposition}
The relation $\cong_f$ is an equivalence relation that preserves
the usual graph isomorphism.
\end{proposition}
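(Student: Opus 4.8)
The plan is to verify the three equivalence-relation axioms and then the isomorphism-implies-fractional-isomorphism statement, all by exhibiting suitable doubly stochastic matrices. For reflexivity, I would take $S = I$, the identity matrix, which is doubly stochastic, and note $A_G I = A_G = I A_G$. For symmetry, suppose $G \cong_f H$ via a doubly stochastic $S$ with $A_G S = S A_H$. I would observe that the transpose $S^t$ is again doubly stochastic, and that since adjacency matrices are symmetric, transposing $A_G S = S A_H$ gives $S^t A_G = A_H S^t$, which witnesses $H \cong_f G$.

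For transitivity, assume $G \cong_f H$ via $S$ (so $A_G S = S A_H$) and $H \cong_f K$ via $T$ (so $A_H T = T A_K$). Here I need $G$, $H$, $K$ to have the same number of vertices for the matrix products to make sense; this follows because a doubly stochastic matrix is square, so $S$ forces $|V(G)| = |V(H)|$ and $T$ forces $|V(H)| = |V(K)|$. Then I would consider the product $ST$: it is doubly stochastic since the product of two doubly stochastic matrices is doubly stochastic (the entries are nonnegative, and $ST\mathbf{1} = S\mathbf{1} = \mathbf{1}$, likewise $(ST)^t\mathbf{1} = T^t S^t \mathbf{1} = T^t \mathbf{1} = \mathbf{1}$). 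Finally $A_G(ST) = (A_G S)T = (S A_H)T = S(A_H T) = S(T A_K) = (ST)A_K$, so $G \cong_f K$.

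For the last claim, that $\cong_f$ preserves usual isomorphism: if $G \cong H$, then by definition there is a permutation matrix $P$ with $A_G = P A_H P^{-1} = P A_H P^t$, equivalently $A_G P = P A_H$. Since every permutation matrix is doubly stochastic (its entries are in $\{0,1\} \subseteq [0,\infty)$ and each row and column has exactly one $1$), $P$ itself witnesses $G \cong_f H$. I do not expect any real obstacle here; the only points requiring a word of care are checking closure of the doubly stochastic matrices under transpose and product, and noting that the dimension constraints are automatically consistent because doubly stochastic matrices are square — everything else is a direct substitution.
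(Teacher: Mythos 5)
Your proof is correct: the paper states this proposition without proof (it is surveyed from the literature), but your argument --- identity matrix for reflexivity, transposing $A_GS = SA_H$ and using symmetry of adjacency matrices for symmetry, the product $ST$ for transitivity, and a permutation matrix as a special doubly stochastic matrix for preservation of isomorphism --- is the standard one and mirrors exactly the strategy the paper itself uses to prove the analogous claim that $\equiv$ is an equivalence relation for hypergraphs. Your remarks on squareness of doubly stochastic matrices and closure under transpose and product cover the only points needing care.
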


\begin{proposition}
If $G \cong_f H$ for two graphs $G$ and $H$ then:
\begin{enumerate}
\item $G$ and $H$ have the same number of vertices;

\item $G$ and $H$ have the same number of edges;

\item $G$ and $H$ have the same degree sequence;

\item the adjacency matrices $A_G$ and $A_H$ have the same maximum
eigenvalue.
\end{enumerate}
\end{proposition}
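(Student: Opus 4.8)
The plan is to use the defining identity $A_G S = S A_H$ together with the two defining properties of a doubly stochastic matrix, $S\mathbf{1} = \mathbf{1}$ and $S^t\mathbf{1} = \mathbf{1}$, almost always by multiplying the identity by the all-ones vector. Since $S$ is square, $A_G$ and $A_H$ have the same dimensions, which is exactly item (1). The remaining items follow by transporting the relevant vectors through $S$.

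For item (3), note that the degree vector of $G$ is $d_G := A_G\mathbf{1}$ and similarly $d_H := A_H\mathbf{1}$. Right-multiplying $A_G S = S A_H$ by $\mathbf{1}$ and using $S\mathbf{1}=\mathbf{1}$ gives $d_G = S d_H$. Transposing the identity and using that $A_G$ and $A_H$ are symmetric yields $S^t A_G = A_H S^t$; right-multiplying by $\mathbf{1}$ and using $S^t\mathbf{1}=\mathbf{1}$ gives $d_H = S^t d_G$. Since $S^t$ is again doubly stochastic, Theorem~\ref{t1.8}(1) applies with $y = d_G$, $x = d_H$, $R = S^t$, and yields $d_G = P_i d_H$ for every permutation matrix $P_i$ in a Birkhoff decomposition of $S$. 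Hence $d_G$ is a permutation of $d_H$, so $G$ and $H$ have the same degree sequence. Item (2) is then immediate, as the number of edges is half the sum of the degrees; alternatively one left- and right-multiplies $A_G S = S A_H$ by $\mathbf{1}^t$ and $\mathbf{1}$ and uses $\mathbf{1}^t S = \mathbf{1}^t$ and $S\mathbf{1}=\mathbf{1}$ to get $\mathbf{1}^t A_G \mathbf{1} = \mathbf{1}^t A_H \mathbf{1}$.

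For item (4), I would argue by Perron--Frobenius. Let $\lambda$ be the largest eigenvalue of $A_H$; since $A_H$ is symmetric and nonnegative, $\lambda$ equals its spectral radius and admits a nonnegative eigenvector $x\ge 0$, $x\ne 0$. Then $A_G(Sx) = S A_H x = \lambda (Sx)$. The small point to check is $Sx \ne 0$: if $Sx = 0$, then since all entries involved are nonnegative we would have $S_{ij}x_j = 0$ for all $i,j$, and choosing $k$ with $x_k>0$ forces column $k$ of $S$ to vanish, contradicting that every column of $S$ sums to $1$. Thus $Sx$ is a genuine eigenvector of $A_G$ for $\lambda$, so $\lambda$ is at most the largest eigenvalue of $A_G$. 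Running the identical argument with $S^t A_G = A_H S^t$ (using that the columns of $S^t$ sum to $1$) gives the reverse inequality, so the two maximum eigenvalues coincide.

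The content of items (1)--(3) is essentially bookkeeping with $\mathbf{1}$ plus the appeal to Theorem~\ref{t1.8}, so I expect no real obstacle there. The only place that needs care is item (4): one must recall that for a symmetric nonnegative matrix the largest eigenvalue is the spectral radius, and one must verify $Sx\ne 0$ so that the transported Perron eigenvector survives — and this is precisely the step where the column-sum condition on $S$ (equivalently, double rather than mere single stochasticity) is used.
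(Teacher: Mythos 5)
Your argument is correct. A point worth noting: the paper states this proposition without proof, as part of a survey of known results from the cited literature; however, it does prove the analogous statement for hypergraphs (Proposition~\ref{basics}), and there items (1)--(3) are handled by exactly your technique --- read off the dimensions from the matrix equation, multiply by $\textbf{1}$ on the appropriate side to transport the degree vector through the doubly stochastic matrix in both directions, and invoke Theorem~\ref{t1.8} to conclude that the two degree vectors are permutations of one another. Your item (4) has no counterpart anywhere in the paper (the hypergraph version of the proposition drops the eigenvalue claim), and your Perron--Frobenius argument for it is sound: the key observations that the largest eigenvalue of a nonnegative symmetric matrix is its spectral radius and hence admits a nonnegative eigenvector, and that $Sx \neq 0$ because no column of a doubly stochastic matrix can vanish, are both correctly identified and justified, giving the two inequalities between the maximum eigenvalues. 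So the proposal is complete and, where the paper offers a comparable proof, it coincides with the paper's method.
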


\begin{proposition}
If $G$ and $H$ are two $r$-regular graphs with $n$ vertices then
$G\cong_f H$.
\end{proposition}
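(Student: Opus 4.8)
The plan is to exhibit an explicit doubly stochastic matrix $S$ satisfying $A_G S = S A_H$, rather than to argue abstractly. The natural candidate is the uniform matrix $S = \frac{1}{n} J_n$: all its entries are nonnegative, and each of its rows and columns consists of $n$ entries equal to $\frac1n$, hence sums to $1$, so $S$ is doubly stochastic.

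With this choice, the verification is a short computation exploiting regularity. First I would compute $A_G S = \frac1n A_G J_n$. Since $G$ is $r$-regular, every row of $A_G$ contains exactly $r$ ones, so $A_G J_n = r J_n$, giving $A_G S = \frac{r}{n} J_n$. Next I would compute $S A_H = \frac1n J_n A_H$. Since $H$ is $r$-regular, every column of $A_H$ sums to $r$, so $J_n A_H = r J_n$, giving $S A_H = \frac{r}{n} J_n$ as well. Comparing the two sides yields $A_G S = S A_H$, which is precisely the defining condition for $G \cong_f H$.

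The main point — there is no real obstacle here — is simply the observation that regularity forces both $A_G J_n$ and $J_n A_H$ to collapse to the same scalar multiple of $J_n$, so the all-equal doubly stochastic matrix intertwines the two adjacency matrices regardless of the internal structure of $G$ and $H$. As a sanity check, this specializes to the remark made in the introduction that two triangles (disjoint union) and the $6$-cycle $C_6$ are fractionally isomorphic, both being $2$-regular on $6$ vertices.
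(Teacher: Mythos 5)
Your proof is correct: the uniform doubly stochastic matrix $S=\frac{1}{n}J_n$ does satisfy $A_GS=SA_H=\frac{r}{n}J_n$ by the row/column sums forced by $r$-regularity. This is exactly the approach the paper takes for the analogous statement about the relation $\equiv$ on regular graphs (where it uses $S_1=\frac{1}{n}J_n$ and $S_2=\frac{1}{a}J_a$ on the incidence matrices), so there is nothing further to add.
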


This implies that the disjoint union of two triangles and a cycle
of length six are fractionally isomorphic. However, $2C_3$ is not
connected, $\omega(2C_3) = \chi(2C_3) =  3$, $\alpha(2C_3) =
\theta(2C_3) = \mu(2C_3) = 2$, and $\tau(2C_3) = k(2C_3) = 4$,
while $C_6$ is connected, $\omega(C_6) = \chi(C_6) = 2$,
$\alpha(C_6) = \theta(C_6) = \mu(C_6) = 3$, $\tau(C_6) = k(C_6) =
3$. So the properties of connectivity, clique and independence
numbers, chromatic number, edge, vertex, and clique cover numbers,
and matching number, are not preserved by fractional isomorphism.

Similarly, $(C_5 \cup C_7) \cong_f C_{12}$. However, $\gamma(C_5
\cup C_7) = 5$, $\gamma(C_{12}) = 4$,
$\Gamma(C_5 \cup C_7) = 7$, and $\Gamma(C_{12}) = 6$. So, domination and total domination numbers are not preserved by fractional isomorphism. \\

The notion of fractional isomorphism is deeply related to the one
of equitable partition.
 We say that a partition $\{V_{1},  \dots, V_{s}\}$ of $V(G)$ is \emph{equitable} provided that for all $i$, $j$ and all $x$, $y$ $\in V_{i}$ we
have $d(x, V_{j}) = d(y, V_{j})$. In other words, each of the
induced subgraphs $G[V_{i}]$ must be regular and each of the
bipartite graphs $G[V_{i}, V_{j} ]$ must be biregular. It is clear
that every graph has an equitable partition: each vertex is a
class by itself. If $G$ is regular, then the singleton $\{V(G)\}$
is an equitable partition. Equitable partitions of a graph are
partially ordered by the usual \emph{refinement} relation for
partitions, i.e., if $P$ and $Q$ are partitions of a common set
$S$, we say that $P$ is a \emph{refinement} of $Q$ provided every
part of $P$ is a subset of some part in $Q$. When $P$ is a
refinement of $Q$ we also say that $P$ is \emph{finer} than $Q$
and that $Q$ is \emph{coarser} than $P$. The equitable partitions
of a graph form a lattice~\cite{McKay-PGI}. A maximum element of
the equitable partition lattice is denoted as a \emph{coarsest}
equitable partition of $G$.

\begin{theorem}~\cite{McKay-PGI}
Let $G$ be a graph. Then $G$ has a unique coarsest equitable
partition.
\end{theorem}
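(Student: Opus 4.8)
The plan is to prove that every graph $G$ has a \emph{unique} coarsest equitable partition by showing that the join of two equitable partitions (in the refinement lattice) is again equitable; since the set of equitable partitions is finite and nonempty (the discrete partition into singletons is always equitable), the existence of a unique maximum follows immediately once we know the poset is a join-semilattice with respect to the \emph{coarsening} order. Concretely, I would fix two equitable partitions $P = \{V_1,\dots,V_s\}$ and $Q = \{W_1,\dots,W_t\}$ and let $R$ be their join, i.e.\ the finest partition that is coarser than both. The goal is to verify the defining property: for any part $Z$ of $R$ and any other part $Z'$ of $R$, all vertices $x,y \in Z$ satisfy $d(x,Z') = d(y,Z')$.

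First I would give an explicit combinatorial description of the join. The parts of $R$ are the equivalence classes of the transitive closure of the relation ``$u$ and $v$ lie in a common part of $P$ or a common part of $Q$.'' Equivalently, think of the bipartite-like incidence graph whose vertices are $V(G)$ and whose ``links'' connect vertices sharing a $P$-part or a $Q$-part; the parts of $R$ are the connected components. The key structural fact I would extract is: if $x,y$ lie in the same part of $R$, then there is a chain $x = u_0, u_1, \dots, u_k = y$ where consecutive $u_i, u_{i+1}$ share either a $P$-part or a $Q$-part.

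The heart of the argument is then a propagation lemma: if $x$ and $y$ share a $P$-part (resp.\ a $Q$-part), then $d(x,Z') = d(y,Z')$ for every part $Z'$ of $R$. To see this, note $Z'$ is a disjoint union of parts of $P$ — say $Z' = V_{j_1} \cup \dots \cup V_{j_r}$ — because $R$ is coarser than $P$. Since $x,y$ are in the same $P$-part, $d(x,V_{j_\ell}) = d(y,V_{j_\ell})$ for each $\ell$ by equitability of $P$, and summing over $\ell$ gives $d(x,Z') = d(y,Z')$. The symmetric statement holds using $Q$. Now chaining along a path $x = u_0, \dots, u_k = y$ inside a common $R$-part, each step preserves the value $d(\cdot, Z')$, so $d(x,Z') = d(y,Z')$. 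This establishes that $R$ is equitable, hence the equitable partitions form a join-semilattice under coarsening; the unique coarsest one is the join of all of them (or, since the poset is finite, simply its unique maximal element, which by the semilattice property is unique).

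I expect the main obstacle to be purely expository rather than mathematical: one must be careful that the join in the \emph{refinement} lattice really is the \emph{least coarsening} above both partitions, and that "coarser than $P$'' genuinely implies "each part is a union of $P$-parts,'' so that the summation step in the propagation lemma is valid. A secondary subtlety is confirming the base cases are nonvacuous — the discrete partition witnesses that the family of equitable partitions is nonempty — and that finiteness of $V(G)$ is what lets us pass from "closed under binary join'' to "has a maximum.'' Everything else is a routine double-counting of edges across unions of parts.
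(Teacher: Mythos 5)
The paper states this result only as a citation to McKay and gives no proof of its own, so there is nothing internal to compare against; judged on its own, your argument is the standard and correct one. Showing that the join of two equitable partitions (connected components of the transitive closure of ``share a $P$-part or a $Q$-part'') is again equitable, via the observation that each part of the join is a disjoint union of $P$-parts (resp.\ $Q$-parts) so that $d(\cdot,Z')$ is constant along each link of the chain, is exactly the semilattice proof found in the cited literature, and your finiteness step correctly upgrades closure under binary joins to the existence of a unique maximum.
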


Let $G$ be a graph and let $P = \{P_{1},\dots,P_{p}\}$ be an
equitable partition of $V(G)$. The \emph{parameters} of $P$ are a
pair $(v,D)$ where $v$ is a $p$-vector whose $i$-th entry is the
size of $P_{i}$ and $D$ is a $(p\times p)$-matrix whose $ij$-entry
is $d(x,P_{j})$ for any $x \in P_{i}$. We say that equitable
partitions $P$ and $Q$ of graphs $G$ and $H$ have the same
parameters if we can index the sets in $P$ and $Q$ so that their
parameters $(v,D)$ are identical. In such a case we say that $G$
and $H$ have a \emph{common equitable partition}. If, in addition,
$P$ and $Q$ are coarsest equitable partitions of $G$ and $H$, then
we
say that $G$ and $H$ have a \emph{common coarsest equitable partition}.\\

Another concept which is central to the understanding of
fractional isomorphism is that of the \emph{iterated degree
sequence} of a graph. Let us recall that the \emph{degree} of a
vertex $v$ in $G$ is the cardinality of its neighbor set, $d(v) =
|N(v)|$, and the \emph{degree sequence of a graph} $G$ is the
multiset of the degrees of its vertices, $d_1(G) = \{d(v) : v \in
V(G)\}$. The \emph{degree sequence of a vertex} is the multiset of
the degrees of its neighbors: $d_1(v) = \{d(w) : w \in N(v)\}$. In
general, for $k \geq 1$, define: $d_{k+1}(G) = \{d_k(v) : v \in
V(G)\}$, and $d_{k+1}(v) = \{d_k(w) : w \in N(v)\}$. The
\emph{ultimate degree sequence} of a vertex $v$ or a graph $G$ are
defined as the infinite lists: $\mathscr{D}(v) = [ d_1(v), d_2(v),
\dots ]$ , and $\mathscr{D}(G) = [ d_1(G), d_2(G), \dots ]$. The
equivalence between having a fractional isomorphism, having a
common coarsest equitable partition, and having the same ultimate
degree sequence is the main theorem about fractional isomorphism.

\begin{theorem}\label{thm:frac-iso-main}
Let $G$ and $H$ be graphs. The following are equivalent.
\begin{enumerate}
\item $G \cong_f H$.

\item $G$ and $H$ have a common coarsest equitable partition.

\item $G$ and $H$ have some common equitable partition.

\item $\mathscr{D}(G) = \mathscr{D}(H)$.
\end{enumerate}
\end{theorem}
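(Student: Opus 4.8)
The plan is to establish the cycle $(2)\Rightarrow(3)\Rightarrow(1)\Rightarrow(2)$ together with the equivalence $(2)\Leftrightarrow(4)$. The implication $(2)\Rightarrow(3)$ is immediate, since a common coarsest equitable partition is a common equitable partition. For $(3)\Rightarrow(1)$, let $P=\{P_1,\dots,P_p\}$ and $Q=\{Q_1,\dots,Q_p\}$ be equitable partitions of $G$ and $H$ with common parameters $(v,D)$, indexed so that $|P_i|=|Q_i|=v_i=:n_i$ and $d(x,P_j)=d(y,Q_j)=D_{ij}$ whenever $x\in P_i$ and $y\in Q_i$. Define $S\in\mathbb R^{n\times n}$, rows indexed by $V(G)$ and columns by $V(H)$, by $S_{uw}=1/n_i$ if $u\in P_i$ and $w\in Q_i$ and $S_{uw}=0$ otherwise; every row and every column of $S$ sums to $1$, so $S$ is doubly stochastic. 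For $u\in P_a$, $w\in Q_b$ one computes
\[
(A_GS)_{uw}=\frac{d(u,P_b)}{n_b}=\frac{D_{ab}}{n_b}\qquad\text{and}\qquad (SA_H)_{uw}=\frac{d(w,Q_a)}{n_a}=\frac{D_{ba}}{n_a},
\]
which agree because $n_aD_{ab}=n_bD_{ba}$, both counting the number of edges between the classes indexed $a$ and $b$ (and, for $a=b$, twice the number of edges inside that class). Hence $A_GS=SA_H$ and $G\cong_f H$.

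For $(2)\Leftrightarrow(4)$, it suffices to show that the partition $\pi(G)$ of $V(G)$ in which $u,u'$ share a cell exactly when $\mathscr D(u)=\mathscr D(u')$ is precisely the coarsest equitable partition of $G$ (whose uniqueness is McKay's theorem quoted above), and that the ultimate degree sequence shared by the vertices of a cell, as well as the number $d(\,\cdot\,,C)$ of neighbours in any cell $C$, is determined by the parameters of that partition. Indeed $\pi(G)$ is equitable because membership in a cell is dictated by the ultimate degree sequence and $d_{k+1}(u)$ is the multiset of $d_k$-values over $N(u)$, and any equitable partition refines $\pi(G)$, since an induction on $k$ shows that two vertices lying in a common class of an equitable partition have equal $d_k$ for every $k$. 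Consequently $G$ and $H$ possess a common coarsest equitable partition if and only if the multisets $\{\mathscr D(v):v\in V(G)\}$ and $\{\mathscr D(w):w\in V(H)\}$ coincide, i.e.\ $\mathscr D(G)=\mathscr D(H)$: the cells are then matched by their common ultimate degree sequence, with equal sizes (equal multiplicities in $\mathscr D$) and equal parameter matrices (the entry $d(u,P_i)$ being the number of neighbours of $u$ with the ultimate degree sequence indexing $P_i$, a quantity read off from $\mathscr D(u)$).

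The crux is $(1)\Rightarrow(2)$. Given a doubly stochastic $S$ with $A_GS=SA_H$, observe that $S^t$ is doubly stochastic, that $A_HS^t=S^tA_G$ (as $A_G,A_H$ are symmetric), and that $S\mathbf 1=S^t\mathbf 1=\mathbf 1$. The plan is to prove, by induction on the rounds $k=0,1,2,\dots$ of color refinement, that $S$ is block-diagonal with respect to the round-$k$ partitions of $V(G)$ and $V(H)$ — that is, $S_{uw}=0$ unless $u,w$ lie in corresponding round-$k$ cells — with each diagonal block doubly stochastic, and that corresponding round-$k$ cells have equal size. The base case $k=0$ is trivial, the round-$0$ partition having a single cell. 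For the inductive step, fix a pair $(C_G,C_H)$ of corresponding round-$k$ cells and let $S^{(C)}$ be the corresponding diagonal block, doubly stochastic by hypothesis. Applying $A_GS=SA_H$ to the indicator vector of $C'_H$, for a round-$k$ cell $C'$, and using that the inductive block structure sends it to the indicator of $C'_G$, one finds that the vector on $C_G$ with $u$-entry $d(u,C'_G)$ equals $S^{(C)}$ times the vector on $C_H$ with $w$-entry $d(w,C'_H)$, and symmetrically with $(S^{(C)})^t$. Theorem~\ref{t1.8}(1), applied with $R=(S^{(C)})^t$, then gives, for each permutation matrix $P_i$ in a Birkhoff decomposition of $S^{(C)}$, the identity $d(\,\cdot\,,C'_G)=P_i\,d(\,\cdot\,,C'_H)$ on $C_G$ — simultaneously for every round-$k$ cell $C'$, with the same $P_i$ — so $P_i$ matches the round-$(k+1)$ subcells of $C_G$ with those of $C_H$ preserving sizes. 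As this holds for every $P_i$ in the decomposition, $S^{(C)}=\sum_i\alpha_iP_i$, hence $S$, is block-diagonal with doubly stochastic blocks with respect to the round-$(k+1)$ refinement, closing the induction. Since color refinement stabilizes at $\pi(G)$ and $\pi(H)$ after finitely many rounds, the final block structure forces $\{\mathscr D(v):v\in V(G)\}=\{\mathscr D(w):w\in V(H)\}$, and $(1)\Rightarrow(2)$ follows through $(2)\Leftrightarrow(4)$. I expect the main technical work to lie here: checking that the intertwining relation and the doubly stochastic block structure descend correctly to the restrictions (occasionally reducing a block to its indecomposable pieces via Proposition~\ref{lem:strong}), and carrying both the cardinality equalities and the block-diagonal form from one round of refinement to the next; the remaining steps are routine counting and linear algebra.
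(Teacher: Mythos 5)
Your proof is correct, but note that the paper does not actually prove Theorem~\ref{thm:frac-iso-main}: it is quoted as a known result from \cite{R-S-U-FGI,U-S-fgt}, and your argument is essentially the classical one from those sources. The genuinely comparable material in the paper is the treatment of the incidence-matrix relation $\equiv$ (Theorems~\ref{thm:congf-equiv}, \ref{thm:equiv-congf} and~\ref{thm:frac-iso-hyper-main}), and there the strategy for the hard direction differs from yours in an instructive way. You extract the coarsest equitable partition by running color refinement round by round, applying Theorem~\ref{t1.8}(1) to each diagonal block of $S$ against each current cell to show that the block structure descends to the next refinement; this yields the common \emph{coarsest} partition directly but requires carrying an induction over rounds. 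The paper instead conjugates $S_1,S_2$ by permutations into a direct sum of indecomposable (hence, by Proposition~\ref{lem:strong}, strongly irreducible) blocks in one shot, and then uses Theorem~\ref{t1.8}(2) -- the part that needs indecomposability -- to conclude that every block of the incidence matrix has constant row and column sums, producing \emph{some} common equitable partition immediately; since item (3) suffices, no iteration is needed. Your passing remark that you might "occasionally" need Proposition~\ref{lem:strong} is actually unnecessary in your scheme, since Theorem~\ref{t1.8}(1) makes no indecomposability assumption; conversely, the paper's scheme never needs refinement rounds. Two expository points you should tighten if you write this up in full: the claim that the partition by ultimate degree sequence is itself equitable requires the stabilization argument (the chain of partitions induced by $(d_1,\dots,d_k)$ is refining on a finite set, hence stabilizes, and only at a stable level does $d_{k_0+1}(u)$ determine the number of neighbours in each cell); and in $(4)\Rightarrow(2)$ the stabilization index must be taken common to $G$ and $H$ (e.g.\ by running the refinement on the disjoint union) so that $d(u,P_i)$ is genuinely read off from $\mathscr{D}(u)$ consistently across the two graphs. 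Neither is a gap in substance.
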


\section{Fractional hypergraph isomorphism}

An \emph{hypergraph} $G$ is a pair $G=(S,X)$ where $S = V(G)$ is a
finite set and $X = E(G)$ is a family of subsets of $S$. The set
$S$ is called the set of \emph{vertices} of the hypergraph. The
elements of $X$ are the \emph{hyperedges} or \emph{edges} of the
hypergraph. The \emph{degree} of a vertex is the number of
hyperedges which contains it. A hypergraph  is \emph{$r$-regular}
if every vertex has degree $r$, and it is \emph{$r$-uniform} if
all the hyperedges are of the same cardinality $r$. Graphs are the
$2$-uniform hypergraphs.

For a hypergraph $G$ with at least one edge, the
\emph{vertex-hyperedge incidence matrix} $M_G$ is the matrix
having $n$ rows (as vertices in the set $S$) and $m$ columns (as
hyperedges in the set $X$), and such that $M_{ij} = 1$ if the
vertex $i$ belongs to the hyperedge $j$ and $0$, otherwise. If the
hypergraph $G$ is a graph (i.e., 2-uniform), $M_G$ coincides with
the usual vertex-edge incidence matrix of the graph, so the
notation $M_G$ is well defined. The hypergraph $G$ is a graph if
and only if every column of $M_G$ has exactly two 1's. In other
words, $\textbf{1}^{t}\cdot M_G=2\cdot \textbf{1}^{t}$. On the
other hand, it is not difficult to see that $M_G\cdot
\textbf{1}=\delta$, where $\delta_{i}$ is the degree of the vertex
$i$ in $G$. The \emph{dual hypergraph} of a hypergraph $H$,
denoted $H^*$, is the hypergraph whose vertex-hyperedge incidence
matrix is $M_H^t$.
The \emph{2-section} of a hypergraph $H$ is the graph with the same vertex set as $H$, and such that two vertices are adjacent if and only if there is a hyperedge of $H$ that contains both of them.\\

In order to define a fractional hypergraph isomorphism notion, we
seek for a matrix equation characterizing the (hyper)graph
isomorphism  in terms of \emph{incidence} matrices instead of
adjacency matrices. Indeed, for two graphs $G$ and $H$, $G\cong H$
if and only if there exists permutation matrices $P_{1}$ and
$P_{2}$ such that $P_{1}M_{G}=M_{H}P_{2}^{t}$ and
$M_{G}P_{2}=P_{1}^{t}M_{H}$, and these equations hold also for
hypergraphs. We will linearly relax these conditions in the same
way as in the fractional graph isomorphism definition.

\begin{definition}
Let $G$ and $H$ be hypergraphs. We write $G\equiv H$ if either $G$
and $H$ have the same number of vertices and no hyperedges, or
their vertex-hyperedge incidence matrices $M_{G}$ are such that
there exist two doubly stochastic matrices $S_{1}$ and $S_{2}$ so
that $S_{1}M_{G}=M_{H}S_{2}^{t}$ and $M_{G}S_{2}=S_{1}^{t}M_{H}$.
\end{definition}

It is clear that if two matrices $M_{G}$ and $M_{H}$ follow the
conditions in the last definition, then they have the same
dimensions (both have $n$ rows and $m$ columns, for some $n, m$).

\begin{proposition}
The relation $\equiv $ is an equivalence relation.
\end{proposition}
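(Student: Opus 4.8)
The plan is to verify the three defining properties of an equivalence relation — reflexivity, symmetry, and transitivity — directly from the matrix conditions in the definition, using the identity matrix and products of doubly stochastic matrices as witnesses. First I would handle reflexivity: given a hypergraph $G$, if $G$ has no hyperedges the condition is satisfied vacuously (it has the same number of vertices as itself and no hyperedges), and otherwise taking $S_1 = S_2 = I$ (which is doubly stochastic) gives $IM_G = M_G I^t$ and $M_G I = I^t M_G$, so $G \equiv G$.

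For symmetry, suppose $G \equiv H$. In the edgeless case the condition is patently symmetric. Otherwise we have doubly stochastic $S_1, S_2$ with $S_1 M_G = M_H S_2^t$ and $M_G S_2 = S_1^t M_H$. I would take the transposes of these (or equivalently substitute the transposed doubly stochastic matrices): set $T_1 = S_1^t$ and $T_2 = S_2^t$, which are again doubly stochastic since the transpose of a doubly stochastic matrix is doubly stochastic. The first equation rearranges to $T_1 M_H = M_G T_2^t$ — indeed $S_1 M_G = M_H S_2^t$ gives $M_G^t S_1^t = S_2 M_H^t$, i.e. $S_2 M_H^t = M_G^t S_1^t$, and transposing back yields $M_H S_2^t = S_1 M_G$; more directly, from $M_G S_2 = S_1^t M_H$ we get $S_1^t M_H = M_G S_2$, which is exactly the first required equation for $H \equiv G$ with $(T_1, T_2) = (S_1^t, S_2^t)$, and from $S_1 M_G = M_H S_2^t$ we get $M_H S_2^t = S_1 M_G$, the second. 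Hence $H \equiv G$.

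For transitivity, suppose $G \equiv H$ and $H \equiv K$. One must first check the dimensions match throughout: if any of the three hypergraphs is edgeless, the witnessing equations force all of them to have no edges and the same vertex count (here one invokes the remark just before the proposition that the incidence matrices involved share dimensions, together with the observation that a doubly stochastic matrix sending a matrix with a column to a matrix with no columns is impossible). In the generic case, let $S_1 M_G = M_H S_2^t$, $M_G S_2 = S_1^t M_H$ and $R_1 M_H = M_K R_2^t$, $M_H R_2 = R_1^t M_K$ be the witnesses. Then $R_1 S_1 M_G = R_1 M_H S_2^t = M_K R_2^t S_2^t = M_K (S_2 R_2)^t$, and similarly $M_G (S_2 R_2) = S_1^t M_H R_2 = S_1^t R_1^t M_K = (R_1 S_1)^t M_K$. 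Since products of doubly stochastic matrices are doubly stochastic, $(R_1 S_1, S_2 R_2)$ witnesses $G \equiv K$.

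The main obstacle is not any single algebraic step — each is a short manipulation of transposes and products — but rather the careful bookkeeping of the degenerate edgeless case and the implicit claim that all dimensions are compatible. The cleanest way to dispatch this is to note at the outset that whenever $S_1 M_G = M_H S_2^t$ holds with $S_1$ doubly stochastic and $M_G$ has $m \geq 1$ columns, then $M_H$ must also have exactly $m$ columns (and both matrices $n$ rows), so the ``no hyperedges'' alternative and the ``incidence matrix'' alternative cannot be mixed along a chain $G \equiv H \equiv K$; once this is observed, the three verifications above go through verbatim.
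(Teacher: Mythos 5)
Your proof is correct and follows essentially the same route as the paper's: identity matrices for reflexivity, transposed witnesses for symmetry, and products of doubly stochastic matrices for transitivity. The only cosmetic difference is that you compose the witnesses directly along $G\equiv H\equiv K$, whereas the paper derives $H\equiv K$ from $G\equiv H$ and $G\equiv K$; your explicit attention to the edgeless case and dimension compatibility is a small bonus the paper dispatches with one sentence.
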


\begin{proof}
It is straightforward for the case with no hyperedges. So we
assume there is at least one hyperedge. \emph{Reflexivity:}
$I_{n}M_{G}=M_{G}I_{m}^{t}$ and $M_{G}I_{m}=I_{n}^{t}M_{G}$, where
$I_{n}=I_{n}^{t}$ and $I_{m}=I_{m}^{t}$ are the identity matrices
of orders $n$ and $m$, respectively (and $n$ and $m$ are the
number of vertices and hyperedges of $G$, respectively).

\emph{Symmetry:} evident because renaming $S_{1}^{t}=U_{1}$ and
$S_{2}^{t}=U_{2}$ we obtain $U_{1}M_{H}=M_{G}U_{2}$ and
$M_{H}U_{2}=U_{1}^{t}M_{G}$.

\emph{Transitivity:} if $S_{1}M_{G}=M_{H}S_{2}^{t}$ and
$M_{G}S_{2}=S_{1}^{t}M_{H}$, and $S_{3}M_{G}=M_{K}S_{4}^{t}$ and
$M_{G}S_{4}=S_{3}^{t}M_{K}$, we have
$S_{1}M_{G}S_{4}=M_{H}S_{2}^{t}S_{4}$. Also
$S_{1}M_{G}S_{4}=S_{1}S_{3}^{t}M_{K}$, so we obtain
$M_{H}S_{2}^{t}S_{4}=S_{1}S_{3}^{t}M_{K}$. In the same way, we
have $S_{3}M_{G}S_{2}=S_{3}S_{1}^{t}M_{H}$ and also
$S_{3}M_{G}S_{2}=M_{K}S_{4}^{t}S_{2}$, so we obtain
$S_{3}S_{1}^{t}M_{H}=M_{K}S_{4}^{t}S_{2}$. Renaming
$U_{1}=S_{2}^{t}S_{4}$ and $U_{2}=S_{3}S_{1}^{t}$, we have
$M_{H}U_{1}=U_{2}^{t}M_{K}$ and $U_{2}M_{H}=M_{K}U_{1}^{t}$. (The
product of two doubly stochastic matrices is a doubly stochastic
matrix).
\end{proof}

\subsection{The relation $\equiv $ for 2-uniform hypergraphs}

We will show that the $\equiv$ relation for 2-uniform hypergraphs
is equivalent to the fractional isomorphism relation for graphs.
It is straightforward for the case of graphs with no edges, so,
from now on, we will assume that the graphs have at least one
edge.  First we will demonstrate it for regular graphs, then for
bipartite biregular graphs, and finally for general graphs.

\begin{proposition}
If $G$ and $H$ are $n$-vertex $r$-regular graphs. Then $G\equiv
H$.
\end{proposition}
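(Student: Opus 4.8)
The plan is to show directly that for two $n$-vertex $r$-regular graphs $G$ and $H$, one can find doubly stochastic matrices $S_1$ (of order $n\times n$) and $S_2$ (of order $m\times m$, where $m = nr/2$ is the common number of edges) satisfying $S_1 M_G = M_H S_2^t$ and $M_G S_2 = S_1^t M_H$. The natural first guess, motivated by the analogous fact for adjacency matrices, is to try the "uniform" choices $S_1 = \frac{1}{n}J_n$ and $S_2 = \frac{1}{m}J_m$; both are doubly stochastic. I would compute each side of the two required equations using the two identities noted in the text for incidence matrices of $r$-regular graphs: $M_G \cdot \mathbf{1} = r\mathbf{1}$ (every vertex has degree $r$) and $\mathbf{1}^t \cdot M_G = 2\mathbf{1}^t$ (every edge has two endpoints), and likewise for $H$.

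Carrying this out: $S_1 M_G = \frac{1}{n} J_n M_G = \frac{1}{n} \mathbf{1}(\mathbf{1}^t M_G) = \frac{1}{n}\mathbf{1}(2\mathbf{1}^t) = \frac{2}{n} J_{n\times m}$. On the other side, $M_H S_2^t = \frac{1}{m} M_H J_m = \frac{1}{m}(M_H \mathbf{1})\mathbf{1}^t = \frac{1}{m}(r\mathbf{1})\mathbf{1}^t = \frac{r}{m} J_{n\times m}$. Since $m = nr/2$, we get $\frac{r}{m} = \frac{2}{n}$, so the two sides agree. The second equation $M_G S_2 = S_1^t M_H$ is handled symmetrically: $M_G S_2 = \frac{1}{m}(M_G\mathbf{1})\mathbf{1}^t = \frac{r}{m}J_{n\times m}$ and $S_1^t M_H = \frac{1}{n}\mathbf{1}(\mathbf{1}^t M_H) = \frac{2}{n}J_{n\times m}$, which again coincide. (Here I am using that $J_n = \mathbf{1}\mathbf{1}^t$ and similarly for $J_m$, so the factorizations are immediate; note also $S_1 = S_1^t$ and $S_2 = S_2^t$ since the $J$ matrices are symmetric.)

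There is essentially no real obstacle here; the only thing to be careful about is the bookkeeping with dimensions — making sure $J_{n\times m}$ rather than a square $J$ appears, and that the scalar $m = nr/2$ is substituted correctly so the constants match. I would also remark at the outset that since $G$ and $H$ are $r$-regular on $n$ vertices they have the same number of edges $m = nr/2 \geq 1$ (the "at least one edge" assumption guarantees $r \geq 1$), so the matrices $M_G, M_H \in \{0,1\}^{n\times m}$ genuinely have the same shape and $S_2$ is well-defined. With the two matrix identities in hand the verification is a two-line computation, so the proposition follows, and this also serves as the base case for the subsequent reduction of general graphs via bipartite biregular graphs.
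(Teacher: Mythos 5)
Your proof is correct and follows essentially the same route as the paper: the same choice $S_1=\frac{1}{n}J_n$, $S_2=\frac{1}{m}J_m$ with $m=nr/2$, and the same computation using $\mathbf{1}^t M_G = 2\cdot\mathbf{1}^t$ and $M_H\cdot\mathbf{1}=r\cdot\mathbf{1}$. Nothing is missing.
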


\begin{proof}
As we mentioned above, we may assume $r>0$. We will compute
explicitly the matrices $S_{1}$ and $S_{2}$. Let us recall that
$J_{s}$ (resp. $J_{s\times t}$) is the $(s\times s)$-matrix (resp.
$(s\times t)$-matrix) with all its coefficients equal to 1. Let
$M_{G}$ and $M_{H}$ be the vertex-edge incidence matrices of the
graphs $G$ and $H$. The dimensions of these matrices are $n\times
a$, where $a=\frac{nr}{2}$ is the number of edges of $G$ and also
of $H$. We compute $S_{1}M_{G}$ and $M_{H}S_{2}^{t}$, where
$S_{1}=S_{1}^{t}=\frac{1}{n} J_{n}$ and
$S_{2}=S_{2}^{t}=\frac{1}{a}J_{a}$.

On the one hand,
$S_{1}M_{G}=\frac{1}{n}J_{n}M_{G}=\frac{1}{n}2J_{n\times a}$,
where the number $2$ appears because every column of $M_{G}$ adds
up to $2$ (it is a graph).

On the other hand,
$M_{H}S_{2}^{t}=M_{H}\frac{1}{a}J_{a}=\frac{1}{a}rJ_{n\times a}$,
where $r$ appears because every row of $M_{H}$ adds up to $r$
(there are exactly $r$ 1's in every row of an $r$-regular graph).

Since $a=\frac{nr}{2}$, we obtain $\frac{2}{n}=\frac{r}{a}$. So,
$S_{1}M_{G}=M_{H}S_{2}^{t}$. And, in the same way,
$M_{G}S_{2}=M_{G}\frac{1}{a}J_{a}=\frac{1}{a}rJ_{n\times a}$ and
$S_{1}^{t}M_{H}=\frac{1}{n}J_{n}M_{H}=\frac{1}{n}2J_{n\times a}$,
which coincide.
\end{proof}

\begin{proposition}
Let $G$ and $H$ be $(b,c)$-regular bipartite graphs, each of them
having $n$ vertices of degree $b$ and $m$ of degree $c$ (so, $
G\cong_f H$). Then $G\equiv H$.
\end{proposition}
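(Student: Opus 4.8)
The plan is to mimic the proof for regular graphs, replacing the all‑ones matrix $\tfrac{1}{n}J_{n}$ that acts on the vertex side by a block‑diagonal matrix adapted to the bipartitions. First I would fix orderings of $V(G)$ and $V(H)$ in which the $n$ vertices of degree $b$ are listed first and the $m$ vertices of degree $c$ last; this is harmless, since re‑indexing the rows of an incidence matrix amounts to left‑multiplication by a permutation matrix (which is doubly stochastic), and $\equiv$ is an equivalence relation, so the validity of $G\equiv H$ does not depend on the chosen orderings. Write $a=nb=mc$ for the common number of edges, so that $M_{G}$ and $M_{H}$ are $(n+m)\times a$ matrices, partitioned into a top $n\times a$ block (rows indexed by the degree‑$b$ vertices) and a bottom $m\times a$ block (rows indexed by the degree‑$c$ vertices).

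Next I would record the arithmetic of these blocks. Since $G$ and $H$ are bipartite with the stated bipartitions, every edge has exactly one endpoint among the degree‑$b$ vertices and one among the degree‑$c$ vertices; hence in each of $M_{G}$ and $M_{H}$ every column of the top block sums to $1$, every column of the bottom block sums to $1$, every row of the top block sums to $b$, and every row of the bottom block sums to $c$. I would then take
\begin{equation*}
S_{1}=S_{1}^{t}=\tfrac{1}{n}J_{n}\oplus\tfrac{1}{m}J_{m}, \qquad S_{2}=S_{2}^{t}=\tfrac{1}{a}J_{a},
\end{equation*}
both of which are symmetric and doubly stochastic (a direct sum of doubly stochastic matrices is doubly stochastic).

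It then remains to verify the two matrix identities. Computing $S_{1}M_{G}$: because each column of the top block of $M_{G}$ sums to $1$, the top block of $S_{1}M_{G}$ equals $\tfrac{1}{n}J_{n\times a}$, and likewise its bottom block equals $\tfrac{1}{m}J_{m\times a}$. Computing $M_{H}S_{2}^{t}=\tfrac{1}{a}M_{H}J_{a}$: every entry in a given row of $M_{H}J_{a}$ equals that row's sum in $M_{H}$, namely $b$ for a top‑block row and $c$ for a bottom‑block row, so $M_{H}S_{2}^{t}$ has top block $\tfrac{b}{a}J_{n\times a}$ and bottom block $\tfrac{c}{a}J_{m\times a}$. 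Since $a=nb$ gives $\tfrac{b}{a}=\tfrac{1}{n}$ and $a=mc$ gives $\tfrac{c}{a}=\tfrac{1}{m}$, we obtain $S_{1}M_{G}=M_{H}S_{2}^{t}$. The identity $M_{G}S_{2}=S_{1}^{t}M_{H}$ follows by the symmetric computation: $M_{G}S_{2}=\tfrac{1}{a}M_{G}J_{a}$ has blocks $\tfrac{b}{a}J_{n\times a}=\tfrac{1}{n}J_{n\times a}$ and $\tfrac{c}{a}J_{m\times a}=\tfrac{1}{m}J_{m\times a}$, while $S_{1}^{t}M_{H}=S_{1}M_{H}$ has exactly the same blocks by the column‑sum argument applied to $M_{H}$. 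Hence $G\equiv H$.

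There is essentially no obstacle here: the computation is the one already carried out for regular graphs, with an extra layer of block bookkeeping. The only points demanding a little care are checking that the block‑diagonal $S_{1}$ is indeed doubly stochastic, and matching the row sums $b$ and $c$ correctly against $\tfrac{1}{n}$ and $\tfrac{1}{m}$ through the relations $a=nb=mc$.
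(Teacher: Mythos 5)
Your proof is correct and follows essentially the same route as the paper's: the same reordering of rows by degree class, the same choice $S_{1}=\tfrac{1}{n}J_{n}\oplus\tfrac{1}{m}J_{m}$ and $S_{2}=\tfrac{1}{a}J_{a}$, and the same row/column-sum computation using $a=nb=mc$. The only addition is your explicit remark that the reordering is harmless, which the paper leaves implicit.
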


\begin{proof}
Let $G$ and $H$ be $(b,c)$-regular bipartite graphs, each of them
having $n$ vertices of degree $b$ and $m$ of degree $c$. Then the
rows of $M_{G}$ and $M_{H}$ can be reordered so that, for each of
them, the first $n$ rows correspond to the vertices of degree $b$.
Let $a = nb = mc$ be the number of edges of $G$ and $H$, which we
assume greater than zero. After the reordering, $M_{G}$ can be
divided into two matrices of $n\times a$ and $m\times a$ (one
below the other) where in each of these matrices there is only one
1 per column, the first matrix has $b$ 1's per row, and the second
matrix has $c$ 1's per row. Namely,

$$M_{G}=%
\begin{bmatrix}
M_{G,1} \\
M_{G,2}%
\end{bmatrix}%
\mbox{ and } M_{H}=%
\begin{bmatrix}
M_{H,1} \\
M_{H,2}%
\end{bmatrix}%
.$$

We will compute explicitly the matrices $S_{1}$ and $S_{2}$. We
define

$$S_{1}=\frac{1}{n}J_{n}\oplus \frac{1}{m}J_{m}=%
\begin{bmatrix}
\frac{1}{n}J_{n} & 0 \\
0 & \frac{1}{m}J_{m}%
\end{bmatrix}%
=S_{1}^{t} \mbox{ and } S_{2}=\frac{1}{a}J_{a}=S_{2}^{t}.$$ Then,
$$S_{1}M_{G}=%
\begin{bmatrix}
\frac{1}{n}J_{n} & 0 \\
0 & \frac{1}{m}J_{m}%
\end{bmatrix}%
\begin{bmatrix}
M_{G,1} \\
M_{G,2}%
\end{bmatrix}%
=%
\begin{bmatrix}
\frac{1}{n}J_{n\times a} \\
\frac{1}{m}J_{m\times a}%
\end{bmatrix}%
\mbox{ and }$$

$$M_{H}S_{2}^{t}=%
\begin{bmatrix}
M_{H,1} \\
M_{H,2}%
\end{bmatrix}%
\frac{1}{a}J_{a}=\frac{1}{a}%
\begin{bmatrix}
bJ_{n\times a} \\
cJ_{m\times a}%
\end{bmatrix}%
$$
Since $\frac{b}{a}=\frac{1}{n}$ and $\frac{c}{a}=\frac{1}{m}$, the
right hand side expressions coincide, so
$S_{1}M_{G}=M_{H}S_{2}^{t}$. In the same way,
$$M_{G}S_{2}=%
\begin{bmatrix}
M_{G,1} \\
M_{G,2}%
\end{bmatrix}%
\frac{1}{a}J_{a}=\frac{1}{a}%
\begin{bmatrix}
bJ_{n\times a} \\
cJ_{m\times a}%
\end{bmatrix}%
\mbox{ and }$$ $$S_{1}^{t}M_{H}=%
\begin{bmatrix}
\frac{1}{n}J_{n} & 0 \\
0 & \frac{1}{m}J_{m}%
\end{bmatrix}%
\begin{bmatrix}
M_{H,1} \\
M_{H,2}%
\end{bmatrix}%
=%
\begin{bmatrix}
\frac{1}{n}J_{n\times a} \\
\frac{1}{m}J_{m\times a}%
\end{bmatrix}%
.$$ Again, the right hand side expressions coincide, so
$M_{G}S_{2}=S_{1}^{t}M_{H}$.
\end{proof}

\begin{proposition}
Let $G$ be a graph with the equitable coarsest partition consisting of two parts. Suppose $G$ can be described with the following parameters: $\ v=%
\begin{bmatrix}
v_1 \\
v_2%
\end{bmatrix}%
$ and $D=%
\begin{bmatrix}
D_{11} & D_{12} \\
D_{21} & D_{22}%
\end{bmatrix}%
$, with $v_1D_{12}=v_2D_{21}$. Let $H$ be another graph with the
same parameters ($ G\cong_f H$). Then $G\equiv H$.
\end{proposition}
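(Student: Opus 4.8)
The plan is to reduce this two-part case to the biregular bipartite case already handled, by reading the parameters $(v,D)$ as an edge-partitioned structure and writing $M_G$ in a block form indexed by the pieces $P_1,P_2$ of the partition and by the three kinds of edges (those inside $P_1$, those inside $P_2$, and those between $P_1$ and $P_2$). Concretely, after reordering vertices so the first $v_1$ rows are $P_1$ and the next $v_2$ rows are $P_2$, and reordering columns so the first $a_{11}=v_1D_{11}/2$ edges lie inside $P_1$, the next $a_{22}=v_2D_{22}/2$ inside $P_2$, and the last $a_{12}=v_1D_{12}=v_2D_{21}$ between the parts, $M_G$ acquires a $2\times 3$ block form whose blocks are incidence matrices of regular or biregular graphs. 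The same holds for $M_H$ with identical block dimensions, since $G$ and $H$ share the parameters $(v,D)$.

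The key steps, in order: (1) Fix the vertex/edge orderings above and record the block dimensions purely in terms of $v$ and $D$ (using $v_1D_{12}=v_2D_{21}$ to make the cross-block square of size $a_{12}$, and assuming for now the convenient case that $D_{11},D_{22}>0$ so $a_{11},a_{22}>0$; the degenerate subcases where a block is empty are handled by dropping that block and are strictly easier). (2) Take $S_1 = \frac1{v_1}J_{v_1}\oplus\frac1{v_2}J_{v_2}$ and $S_2 = \frac1{a_{11}}J_{a_{11}}\oplus\frac1{a_{22}}J_{a_{22}}\oplus\frac1{a_{12}}J_{a_{12}}$, both visibly doubly stochastic and symmetric. (3) Compute $S_1M_G$ blockwise: row-averaging over $P_1$ turns the $P_1$-rows of the ``inside $P_1$'' columns into $\frac{2}{v_1}J$, of the ``inside $P_2$'' columns into $0$, and of the cross columns into $\frac{1}{v_1}J$ (since each cross edge meets $P_1$ in exactly one vertex); symmetrically for the $P_2$-rows. (4) Compute $M_HS_2^t$ blockwise: column-averaging turns the $P_1$-rows of the ``inside $P_1$'' block into $\frac{D_{11}}{a_{11}}J$, of the ``inside $P_2$'' block into $0$, of the cross block into $\frac{D_{12}}{a_{12}}J$; symmetrically below. (5) Check the scalars match: $\frac{2}{v_1}=\frac{D_{11}}{a_{11}}$ and $\frac{1}{v_1}=\frac{D_{12}}{a_{12}}$ follow from $a_{11}=v_1D_{11}/2$, $a_{12}=v_1D_{12}$, and similarly on the $P_2$ side, giving $S_1M_G=M_HS_2^t$. (6) Repeat the analogous computation for $M_GS_2$ versus $S_1^tM_H$; by the same arithmetic they agree.

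The main obstacle I anticipate is purely bookkeeping rather than conceptual: getting the block dimensions of $M_G$ and $M_H$ to coincide on the nose requires that $G$ and $H$ have the same number of edges of each of the three types, which is exactly what the shared parameters $(v,D)$ guarantee ($a_{11},a_{22}$ are determined by $v_i,D_{ii}$ and $a_{12}$ by $v_1D_{12}$), and one must also verify that within each block $M_{G}$ really does have the advertised row/column sums — e.g. that the cross block has exactly one $1$ per column in each of its two row-groups, and constant row sums $D_{12}$ and $D_{21}$, which is precisely the definition of an equitable partition. The hypothesis $v_1D_{12}=v_2D_{21}$ is used only to make the cross block's column count $a_{12}$ well defined (consistency of counting the cross edges from either side). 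Once the block skeleton is in place, steps (3)--(6) are the same $J$-matrix multiplications as in the two preceding propositions, just done three blocks at a time, so I would present them compactly rather than in full.
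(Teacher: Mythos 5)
Your proposal is correct and follows essentially the same route as the paper's proof: the same block decomposition of $M_G$ and $M_H$ into the two vertex classes and the three edge types, the same choices $S_1=\frac{1}{v_1}J_{v_1}\oplus\frac{1}{v_2}J_{v_2}$ and $S_2=\frac{1}{a_{1}}J_{a_{1}}\oplus\frac{1}{a_{2}}J_{a_{2}}\oplus\frac{1}{a_{12}}J_{a_{12}}$, and the same scalar verifications $\frac{2}{v_i}=\frac{D_{ii}}{a_{i}}$ and $\frac{1}{v_i}=\frac{D_{ij}}{a_{ij}}$. The paper likewise notes that empty blocks are simply dropped from the direct sum, so nothing is missing.
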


\begin{proof}
Like for the previous propositions, we define explicitly doubly
stochastic matrices  $S_{1}$ and $S_{2}$. Let
\begin{equation*}
S_{1}=\frac{1}{v_1}J_{v_1}\oplus \frac{1}{v_2}J_{v_2}=%
\begin{bmatrix}
\frac{1}{v_1}J_{v_1} & 0 \\
0 & \frac{1}{v_2}J_{v_2}%
\end{bmatrix}%
=S_{1}^{t}
\end{equation*}
and
\begin{equation*}
S_{2}=\frac{1}{a_{1}}J_{a_{1}}\oplus \frac{1}{a_{2}}J_{a_{2}}\oplus \frac{1}{%
a_{12}}J_{a_{12}}=%
\begin{bmatrix}
\frac{1}{a_{1}}J_{a_{1}} & 0 & 0 \\
0 & \frac{1}{a_{2}}J_{a_{2}} & 0 \\
0 & 0 & \frac{1}{a_{12}}J_{a_{12}}%
\end{bmatrix}%
=S_{2}^{t}
\end{equation*}
where $a_{1}=\frac{v_1D_{11}}{2}$, $a_{2}=\frac{v_2D_{22}}{2}$,
and $a_{12}=v_1D_{12}=v_2D_{21}$ are the number of edges of $G$
and $H$ within the first part, the second part, and between both
parts, respectively. If some of them is zero, we do not include
the corresponding term in the direct sum. So, $M_{G}$ and $M_{H}$
have the same dimensions $n\times a$, where $n=v_1+v_2$ and
$a=a_{1}+a_{2}+a_{12}$. And, reordering them, we can write in
blocks:

$$M_{G}=%
\begin{bmatrix}
M_{G,11} & 0 & M_{G,12} \\
0 & M_{G,22} & M_{G,21}%
\end{bmatrix}%
$$ where $M_{G,11}$ (of dimension $v_1\times a_{1}$) corresponds
to the $D_{11}$-regular induced subgraph of $v_1$ vertices,
$M_{G,22}$ (of dimension $v_2\times a_{2}$) corresponds to the
$D_{22}$-regular induced subgraph of $v_2$ vertices, and
$M_{G,12}$ and $M_{G,21}$ (of dimension $v_1\times a_{12}$ and
$v_2\times a_{12}$, respectively) correspond to the bipartite
$(D_{12},D_{21})$-regular subgraph joining them. Notice that
$M_{G,11}\cdot \textbf{1}=D_{11} \cdot \textbf{1}$,
$\textbf{1}^t\cdot M_{G,11}=2\cdot \textbf{1}^t$, $M_{G,22}\cdot
\textbf{1}=D_{22}\cdot \textbf{1}$ and $\textbf{1}^t\cdot
M_{G,22}=2\cdot \textbf{1}^t$. Also, $M_{G,12}\cdot
\textbf{1}=D_{12}\cdot \textbf{1}$, $\textbf{1}^t\cdot
M_{G,12}=\textbf{1}^t$, $M_{G,21}\cdot \textbf{1}=D_{21}\cdot
\textbf{1}$ and $\textbf{1}^t\cdot M_{G,21}=\textbf{1}^t$. Then,

\begin{align*}
S_{1}M_{G} & =%
\begin{bmatrix}
\frac{1}{v_1}J_{v_1} & 0 \\
0 & \frac{1}{v_2}J_{v_2}%
\end{bmatrix}%
\begin{bmatrix}
M_{G,11} & 0 & M_{G,12} \\
0 & M_{G,22} & M_{G,21}%
\end{bmatrix} \\
& =%
\begin{bmatrix}
\frac{2}{v_1}J_{v_1\times a_{1}} & 0 & \frac{1}{v_1}J_{v_1\times a_{12}} \\
0 & \frac{2}{v_2}J_{v_2\times a_{2}} & \frac{1}{v_2}J_{v_2\times a_{12}}%
\end{bmatrix}%
\mbox{ and }
\end{align*}

\begin{align*}
M_{H}S_{2}^{t} & =%
\begin{bmatrix}
M_{H,11} & 0 & M_{H,12} \\
0 & M_{H,22} & M_{H,21}%
\end{bmatrix}%
\begin{bmatrix}
\frac{1}{a_{1}}J_{a_{1}} & 0 & 0 \\
0 & \frac{1}{a_{2}}J_{a_{2}} & 0 \\
0 & 0 & \frac{1}{a_{12}}J_{a_{12}}%
\end{bmatrix}\\
& =%
\begin{bmatrix}
\frac{D_{11}}{a_{1}}J_{v_1\times a_{1}} & 0 &
\frac{D_{12}}{a_{12}}J_{v_1\times a_{12}}
\\
0 & \frac{D_{22}}{a_{2}}J_{v_2\times a_{2}} & \frac{D_{21}}{a_{12}}J_{v_2\times a_{12}}%
\end{bmatrix}%
\end{align*}

\noindent Since $\frac{2}{v_1}=\frac{D_{11}}{a_{1}}$,
$\frac{2}{v_2}=\frac{D_{22}}{a_{2}}$,
$\frac{D_{12}}{a_{12}}=\frac{1}{v_1}$, and
$\frac{D_{21}}{a_{12}}=\frac{1}{v_2}$, the expressions above
coincide. Also,

\begin{align*}
S_{1}^{t}M_{H} & =%
\begin{bmatrix}
\frac{1}{v_1}J_{v_1} & 0 \\
0 & \frac{1}{v_2}J_{v_2}%
\end{bmatrix}%
\begin{bmatrix}
M_{H,11} & 0 & M_{H,12} \\
0 & M_{H,22} & M_{H,21}%
\end{bmatrix}\\
& =
\begin{bmatrix}
\frac{2}{v_1}J_{v_1\times a_{1}} & 0 & \frac{1}{v_1}J_{v_1\times a_{12}} \\
0 & \frac{2}{v_2}J_{v_2\times a_{2}} & \frac{1}{v_2}J_{v_2\times a_{12}}%
\end{bmatrix}%
\mbox{ and }
\end{align*}

\begin{align*}
M_{G}S_{2} & =%
\begin{bmatrix}
M_{G,11} & 0 & M_{G,12} \\
0 & M_{G,22} & M_{G,21}%
\end{bmatrix}%
\begin{bmatrix}
\frac{1}{a_{1}}J_{a_{1}} & 0 & 0 \\
0 & \frac{1}{a_{2}}J_{a_{2}} & 0 \\
0 & 0 & \frac{1}{a_{12}}J_{a_{12}}%
\end{bmatrix}\\
& =%
\begin{bmatrix}
\frac{D_{11}}{a_{1}}J_{v_1\times a_{1}} & 0 &
\frac{D_{12}}{a_{12}}J_{v_1\times a_{12}}
\\
0 & \frac{D_{22}}{a_{2}}J_{v_2\times a_{2}} & \frac{D_{21}}{a_{12}}J_{v_2\times a_{12}}%
\end{bmatrix}%
\end{align*}

\noindent  are equal. It is not hard to check that the equations
hold still when some of $a_1$, $a_2$, $a_{12}$ are zero and the
corresponding terms are missing in the direct sum.
\end{proof}

\subsubsection{General Case}

We show in the following two theorems that two graphs $G$ and $H$
are fractionally isomorphic if and only if  $G\equiv H$.

\begin{theorem}\label{thm:congf-equiv}
Let $G$ and $H$ be two fractionally isomorphic graphs ($G\cong_f
H$). Then $G\equiv H$.
\end{theorem}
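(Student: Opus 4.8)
The plan is to reduce the general case to the three preceding propositions by decomposing both graphs along a common equitable partition, exactly as in the two‑part proposition but with an arbitrary number of parts. We may assume $G$ has at least one edge (the edgeless case being immediate). Since $G\cong_f H$, Theorem~\ref{thm:frac-iso-main} gives a common equitable partition: index the parts of $V(G)$ as $P_1,\dots,P_p$ and those of $V(H)$ as $Q_1,\dots,Q_p$ so that $|P_i|=|Q_i|=v_i$ and $d(x,P_j)=d(y,Q_j)=D_{ij}$ for all $x\in P_i$, $y\in Q_i$. The edges then fall into classes indexed by unordered pairs $\{i,j\}$ with $i\le j$: the class $\{i,i\}$ consists of the edges inside $P_i$, of which there are $a_{ii}=v_iD_{ii}/2$, forming a $D_{ii}$‑regular graph on $v_i$ vertices; the class $\{i,j\}$ with $i<j$ consists of the edges between $P_i$ and $P_j$, of which there are $a_{ij}=v_iD_{ij}=v_jD_{ji}$, forming a $(D_{ij},D_{ji})$‑biregular bipartite graph. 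The same counts hold for $H$. Reordering the rows of $M_G$ by the parts $P_i$ and the columns by edge class — and likewise for $M_H$ — puts $M_G$ and $M_H$ into the same block form, in which the column of an edge of class $\{i,j\}$ has nonzero entries only in the row blocks $i$ and $j$.

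Next I would define, blockwise, $S_1=\bigoplus_{i=1}^{p}\frac{1}{v_i}J_{v_i}$ over the vertex parts and $S_2=\bigoplus_{\{i,j\}}\frac{1}{a_{ij}}J_{a_{ij}}$ over the nonempty edge classes, so that $S_1=S_1^{t}$ and $S_2=S_2^{t}$ are doubly stochastic. It remains to check $S_1M_G=M_HS_2^{t}$ and $M_GS_2=S_1^{t}M_H$, which amounts to a block‑by‑block verification of the same kind used in the previous proofs. For the block of $M_G$ (or $M_H$) indexed by vertex part $i$ and edge class $\{i,i\}$, every column sums to $2$ and every row to $D_{ii}$, so left multiplication by $\frac1{v_i}J_{v_i}$ gives $\frac{2}{v_i}J_{v_i\times a_{ii}}$ while right multiplication by $\frac1{a_{ii}}J_{a_{ii}}$ gives $\frac{D_{ii}}{a_{ii}}J_{v_i\times a_{ii}}$, and these agree since $a_{ii}=v_iD_{ii}/2$. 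For the block indexed by vertex part $i$ and edge class $\{i,j\}$ with $i<j$, every column sums to $1$ and every row to $D_{ij}$, so left multiplication by $\frac1{v_i}J_{v_i}$ gives $\frac{1}{v_i}J_{v_i\times a_{ij}}$ and right multiplication by $\frac1{a_{ij}}J_{a_{ij}}$ gives $\frac{D_{ij}}{a_{ij}}J_{v_i\times a_{ij}}$, and these agree since $a_{ij}=v_iD_{ij}$; the block for vertex part $j$ and class $\{i,j\}$ is symmetric, using $a_{ij}=v_jD_{ji}$. Every remaining block (vertex part $i$, edge class $\{j,k\}$ with $i\notin\{j,k\}$) is zero on both sides. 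Hence the two products agree block by block, giving $S_1M_G=M_HS_2^{t}$, and $M_GS_2=S_1^{t}M_H$ follows from the same identities because $S_1,S_2$ are symmetric.

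I expect the only real difficulty to be organizational rather than mathematical: fixing a clean indexing of the row blocks by $\{1,\dots,p\}$ and of the column blocks by pairs $\{i,j\}$, and tracking which row blocks a given column block can hit — precisely the bookkeeping of the two‑part proposition, only with more parts. As in that proof, the degenerate cases where some $a_{ij}=0$ (so the corresponding summand is simply omitted from $S_2$) require a line of justification, but introduce nothing new. No idea beyond the column‑sum/row‑sum counting already employed is needed.
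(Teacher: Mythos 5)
Your proposal is correct and follows essentially the same route as the paper's proof: both pass to a common (coarsest) equitable partition with parameters $(v,D)$, reorder $M_G$ and $M_H$ into blocks indexed by vertex parts and edge classes, take $S_1$ and $S_2$ as direct sums of scaled all-ones matrices, and verify the two matrix equations block by block via the identities $a_{ii}=v_iD_{ii}/2$ and $a_{ij}=v_iD_{ij}=v_jD_{ji}$, omitting empty edge classes. No substantive difference from the paper's argument.
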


\begin{proof}
If $G\cong_f H$, both graphs have the same number of vertices (say
$n$) and edges (say $a$), and the same coarsest equitable
partition. Thus, they can be described with the same parameters
$v$ and $D$ where $v$ is a vector of $k$ numbers corresponding to
the sizes of the $k$ parts of the graph and $D$ is a $(k\times
k)$-matrix whose coefficients correspond to the degrees of
connection within each part and between parts. Let $t=k+{k \choose
2}=\frac{k(k+1)}{2}$. We can classify the edges of each graph into
at most $t$ types, according to the blocks in which they have
their endpoints. Let $a_i = \frac{v_iD_{ii}}{2}$ for $1 \leq i
\leq k$ be the number of internal edges of the $i$-th part, and
$a_{ij} = v_iD_{ij}$ for $1 \leq i < j \leq k$ be the number of
edges joining the $i$-th and the $j$-th parts. Reordering rows and
columns, we can divide $M_{G}$ (whose dimension is $n\times a$)
into at most $kt$ blocks. For the first (at most) $k$ groups of
columns, there is one non-zero block, namely $M_{G,ii}$ for $1\leq
i\leq k$ such that $D_{ii} > 0$. The block $M_{G,ii}$ has
dimension $v_i \times a_i$ and corresponds to the regular
component $G[V_i]$ with at least one edge. It holds $M_{G,ii}\cdot
\textbf{1}=D_{ii}\cdot \textbf{1}$ and $\textbf{1}^t\cdot
M_{G,ii}=2\cdot \textbf{1}^t$. In the remaining groups of columns,
there are two non-zero blocks per group, namely $M_{G,ij}$ and
$M_{G,ji}$, for $1\leq i < j\leq k$ such that $D_{ij} > 0$, which
correspond to the bipartite subgraph $G[V_i,V_j]$  with at least
one edge. The blocks $M_{G,ij}$  and $M_{G,ji}$ are of dimension
$v_i \times a_{ij}$ and $v_j \times a_{ij}$, respectively. In this
case, $M_{G,ij}\cdot \textbf{1}= D_{ij} \cdot \textbf{1}$,
$\textbf{1}^t\cdot M_{G,ij}=\textbf{1}^t$, $M_{G,ji}\cdot
\textbf{1}= D_{ji} \cdot \textbf{1}$, and $\textbf{1}^t\cdot
M_{G,ji}=\textbf{1}^t$. We define $S_{1}= \frac{1}{v_{1}}J_{v_{1}}
\oplus \dots \oplus \frac{1}{v_{k}}J_{v_{k}}$ and
$S_{2}=\bigoplus_{\{1 \leq i \leq k  \ : \  a_i > 0\}}
\frac{1}{a_{i}}J_{a_{i}}\oplus \bigoplus_{\{1 \leq i < j \leq k \
: \  a_{ij} > 0\}} \frac{1}{a_{ij}}J_{a_{ij}}$ (this last part
ordered lexicographically by $(i,j)$). Then  $S_{1}=S_{1}^{t}$,
$S_{2}=S_{2}^{t}$, and

\medskip

{\scriptsize \setlength{\arraycolsep}{.5\arraycolsep}
$$S_{1}M_{G}= \begin{bmatrix}
\frac{1}{v_{1}}J_{v_{1}} & 0 &  \ccdots & 0 & 0 \\
0 & \frac{1}{v_{2}}J_{v_{2}} &  \ccdots & 0 & 0 \\
\ccdots & \ccdots & \ccdots & \ccdots & \ccdots \\
0 & 0 &  \ccdots & \frac{1}{v_{k-1}}J_{v_{k-1}} & 0 \\
0 & 0 &  \ccdots & 0 & \frac{1}{v_{k}}J_{v_{k}}%
\end{bmatrix}
\begin{bmatrix}
M_{G,11} & 0  & \ccdots & 0 & M_{G,12} & M_{G,13} & \ccdots & 0 \\
0 & M_{G,22}  & \ccdots & 0 & M_{G,21} & 0 & \ccdots & 0 \\
0 & 0 &  \ccdots & 0 & 0 & M_{G,31} & \ccdots & 0 \\
\ccdots &  \ccdots & \ccdots & \ccdots & \ccdots & \ccdots & \ccdots & \ccdots \\
0 & 0 & \ccdots & \ccdots & 0 & 0 & \ccdots & M_{G,(k-1)k} \\
0 & 0 & \ccdots & M_{G,kk} & 0 & 0 & \ccdots & M_{G,k(k-1)}%
\end{bmatrix}%
$$

$$\noindent
=\begin{bmatrix}
\frac{2}{v_{1}}J_{v_{1}\times a_{1}} & 0 & \ccdots & 0 & 0 & \frac{1}{v_{1}}%
J_{v_{1}\times a_{12}} & \frac{1}{v_{1}}J_{v_{1}\times a_{13}} &
\ccdots & 0
\\
0 & \frac{2}{v_{2}}J_{v_{2}\times a_{2}} & \ccdots & 0 & 0 & \frac{1}{v_{2}}%
J_{v_{2}\times a_{12}} & 0 & \ccdots & 0 \\
0 & 0 &  \ccdots & 0 & 0 & 0 & \frac{1}{%
v_{3}}J_{v_{3}\times a_{13}} & \ccdots & 0 \\
\ccdots & \ccdots & \ccdots & \ccdots & \ccdots & \ccdots & \ccdots & \ccdots & \ccdots \\
0 & 0 & \ccdots & \frac{2}{v_{k-1}}J_{v_{k-1}\times a_{k-1}} & 0 &
0 & 0 &
\ccdots & \frac{1}{v_{k-1}}J_{v_{k-1}\times a_{(k-1)k}} \\
0 & 0 & \ccdots & 0 & \frac{2}{v_{k}}J_{v_{k}\times a_{k}} & 0 & 0
& \ccdots &
\frac{1}{v_{k}}J_{v_{k}\times a_{(k-1)k}}%
\end{bmatrix}%
$$

\bigskip

$$M_{H}S_{2}^{t}=%
\begin{bmatrix}
M_{H,11} & 0 & \ccdots  & 0 & M_{H,12} & M_{H,13} & \ccdots & 0 \\
0 & M_{H,22} & \ccdots  & 0 & M_{H,21} & 0 & \ccdots & 0 \\
0 & 0 & \ccdots  & 0 & 0 & M_{H,31} & \ccdots & 0 \\
\ccdots & \ccdots &  \ccdots & \ccdots & \ccdots & \ccdots & \ccdots & \ccdots \\
0 & 0 &  \ccdots  & \ccdots & 0 & 0 & \ccdots & M_{H,(k-1)k} \\
0 & 0 &  \ccdots & M_{H,kk} & 0 & 0 & \ccdots & M_{H,k(k-1)}%
\end{bmatrix}%
\begin{bmatrix}
\frac{1}{a_{1}}J_{a_{1}} & 0 &  \ccdots & 0 \\
0 & \frac{1}{a_{2}}J_{a_{2}} &  \ccdots & 0 \\
0 & 0 &  \ccdots & 0 \\
\ccdots & \ccdots  & \ccdots & \ccdots  \\
0 & 0 &  \ccdots  & 0 \\
0 & 0 &  \ccdots & \frac{1}{a_{(k-1)k}}J_{a_{(k-1)k}}%
\end{bmatrix}%
$$

$$=
\begin{bmatrix}
\frac{D_{11}}{a_{1}}J_{v_{1}\times a_{1}} & 0 &  \ccdots  & 0 & \frac{%
D_{12}}{a_{12}}J_{v_{1}\times a_{12}} & \frac{D_{13}}{a_{13}}%
J_{v_{1}\times a_{13}} & \ccdots & 0 \\
0 & \frac{D_{22}}{a_{2}}J_{v_{2}\times a_{2}} &  \ccdots  & 0 & \frac{%
D_{21}}{a_{12}}J_{v_{2}\times a_{12}} & 0 & \ccdots & 0 \\
0 & 0 &  \ccdots  & 0 & 0 & \frac{%
D_{31}}{a_{13}}J_{v_{3}\times a_{13}} & \ccdots & 0 \\
\ccdots & \ccdots & \ccdots  & \ccdots & \ccdots & \ccdots & \ccdots & \ccdots \\
0 & 0 &  \ccdots  & 0
& 0 & 0 & \ccdots & \frac{D_{(k-1)k}}{a_{(k-1)k}}J_{v_{k-1}\times a_{(k-1)k}} \\
0 & 0 &  \ccdots  & \frac{D_{kk}}{a_{k}}J_{v_{k}\times a_{k}} & 0
& 0 & \ccdots
& \frac{D_{k(k-1)}}{a_{(k-1)k}}J_{v_{k}\times a_{(k-1)k}}%
\end{bmatrix}%
$$ }
\bigskip

So, $S_{1}M_{G}=M_{H}S_{2}^{t}$ because
$\frac{2}{v_{i}}=\frac{D_{ii}}{a_{i}}$ for $1\leq i\leq k$, $a_i >
0$ and $\frac{1}{v_{i}}=\frac{D_{ij}}{a_{ij}}$,
$\frac{1}{v_{j}}=\frac{D_{ji}}{a_{ij}}$  for $1\leq i < j \leq k$,
$a_{ij} > 0$. In the same way, we have
$S_{1}^{t}M_{H}=M_{G}S_{2}$.
\end{proof}

We will now show that if two hypergraphs $G$ and $H$ are graphs
(2-uniform hypergraphs) and  $G\equiv H$, then $G\cong_f H$. To do
this, we use the same ideas as in the main theorem of fractional
isomorphism.

\begin{theorem}\label{thm:equiv-congf}
If $G\equiv H$ and $G$ and $H$ are graphs then $G\cong_f H$.
\end{theorem}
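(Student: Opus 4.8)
The plan is to deduce from $G \equiv H$ that $G$ and $H$ have a common equitable partition, and then invoke Theorem~\ref{thm:frac-iso-main} to conclude $G \cong_f H$. So suppose $S_1 M_G = M_H S_2^t$ and $M_G S_2 = S_1^t M_H$ for doubly stochastic matrices $S_1$ (of order $n$) and $S_2$ (of order $m$, where $m$ is the common number of edges). First I would reduce to the case where $S_1$ is block-diagonal with indecomposable blocks: take a Birkhoff decomposition and use the fact that any doubly stochastic matrix is, after simultaneous row/column permutation, a direct sum of indecomposable doubly stochastic matrices. These blocks induce a partition $P = \{V_1, \dots, V_k\}$ of $V(G)$ (and, via $S_1$, a corresponding partition of $V(H)$, since the structure of $S_1$ is symmetric enough — here one must be careful, because a priori $S_1$ relates $V(G)$ to $V(H)$; the cleanest route is to argue that the ``natural'' doubly stochastic matrix arising from $M_G M_G^t$-type relations must be block-diagonal on a common index set, mirroring the graph proof in~\cite{U-S-fgt}).

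The core of the argument is then to show that $P$ is an equitable partition of $G$ and the corresponding partition $Q$ is an equitable partition of $H$ with the same parameters. The key identities are obtained by right-multiplying $S_1 M_G = M_H S_2^t$ by $M_G^t$ and by $\textbf{1}$, and similarly manipulating $M_G S_2 = S_1^t M_H$. From $S_1 M_G = M_H S_2^t$ and $M_G S_2 = S_1^t M_H$ one gets $S_1 M_G M_G^t = M_H S_2^t M_G^t = M_H (M_G S_2)^t = M_H (S_1^t M_H)^t = M_H M_H^t S_1$. Now $M_G M_G^t = A_G + \mathrm{diag}(\delta^G)$ where $\delta^G$ is the degree vector (since $G$ is a graph, off-diagonal entries of $M_G M_G^t$ count common edges, which is adjacency, and diagonal entries are degrees). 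Since $G \equiv H$ forces the degree sequences to match up through $S_1$ (apply both equations to $\textbf{1}$: $M_G \textbf{1} = \delta^G$, and $S_1 M_G \textbf{1} = M_H S_2^t \textbf{1} = M_H \textbf{1} = \delta^H$, so $S_1 \delta^G = \delta^H$, and symmetrically), the relation $S_1 M_G M_G^t = M_H M_H^t S_1$ becomes, up to the diagonal correction, $S_1 A_G = A_H S_1$ — that is, $S_1$ is a fractional isomorphism between $G$ and $H$ in the original adjacency sense, whence $G \cong_f H$ directly. (One should double-check the diagonal bookkeeping: $S_1 \mathrm{diag}(\delta^G) = \mathrm{diag}(\delta^H) S_1$ needs $S_1$ to intertwine the degree-diagonal matrices, which follows from $S_1 \delta^G = \delta^H$ together with $S_1$ being doubly stochastic only when degrees are constant on blocks; the honest version is to first pass to the block-refined picture and verify biregularity block by block, exactly as in the converse theorems above.)

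Concretely, I would carry out the steps in this order: (1) handle the trivial case (no edges) and assume $m \geq 1$; (2) from applying the two matrix equations to $\textbf{1}$, record $S_1 \delta^G = \delta^H$ and $S_2 (M_G^t \textbf{1}) = M_H^t \textbf{1} = 2 \cdot \textbf{1}$, noting $2\cdot\textbf1$ is fixed by the doubly stochastic $S_2$; (3) derive $S_1 M_G M_G^t = M_H M_H^t S_1$ by the chain of substitutions above; (4) rewrite $M_G M_G^t = A_G + \mathrm{diag}(\delta^G)$ and likewise for $H$; (5) combine (3) and (4) with the degree relation to extract $S_1 A_G = A_H S_1$, possibly after first replacing $S_1$ by a block-diagonal doubly stochastic matrix obtained from its indecomposable-component decomposition and checking the degree vector is constant on each block (using Proposition~\ref{lem:strong} and Theorem~\ref{t1.8} as in the proof of Theorem~\ref{thm:frac-iso-main}); (6) conclude $G \cong_f H$ by definition.

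The main obstacle I anticipate is step (5), the diagonal/degree bookkeeping: the identity $S_1 M_G M_G^t = M_H M_H^t S_1$ alone does not immediately give $S_1 A_G = A_H S_1$ unless one knows $S_1$ commutes appropriately with the degree-diagonal matrices, and that in turn requires knowing the degree is constant on the ``support blocks'' of $S_1$. The standard way around this, which I would follow, is to not try to use an arbitrary witnessing $S_1$ but to refine to its indecomposable blocks first: indecomposability of a block forces (via Theorem~\ref{t1.8}(2), applied to the degree vectors restricted to that block) the relevant degree entries to be equal, after which the diagonal correction terms cancel cleanly and the argument goes through. A secondary subtlety is ensuring the partition read off from $S_1$ on the $G$-side genuinely corresponds to one on the $H$-side with matching parameters; this is handled exactly as in the known proof that a fractional isomorphism yields a common equitable partition, since once $S_1 A_G = A_H S_1$ is established we are back in the setting of Theorem~\ref{thm:frac-iso-main}.
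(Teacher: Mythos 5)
Your proposal is correct in substance but follows a genuinely different route from the paper. The paper's proof stays entirely on the incidence-matrix side: it permutes $S_1$ and $S_2$ into direct sums of indecomposable doubly stochastic blocks, partitions $M_G$ and $M_H$ into corresponding blocks $G_{ij}$, $H_{ij}$, and uses Theorem~\ref{t1.8} to show every block has constant row sums and constant column sums (the latter necessarily $0$, $1$, or $2$ for graphs); this exhibits a common equitable partition, and Theorem~\ref{thm:frac-iso-main} finishes. Your route instead eliminates $S_2$ altogether via the chain $S_1 M_G M_G^t = M_H S_2^t M_G^t = M_H (M_G S_2)^t = M_H M_H^t S_1$, rewrites $M_G M_G^t = A_G + \mathrm{diag}(\delta^G)$, and lands directly on $S_1 A_G = A_H S_1$, i.e.\ the adjacency-matrix definition of $\cong_f$ (after transposing, using symmetry of $A_G$, $A_H$). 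This is arguably slicker, and it is worth pointing out that your anticipated obstacle in step (5) is easier than you fear: you do \emph{not} need the indecomposable-block refinement, nor degrees ``constant on blocks.'' The exact condition needed for $\mathrm{diag}(\delta^H)\,S_1 = S_1\,\mathrm{diag}(\delta^G)$ is that $d_H(i)=d_G(j)$ whenever $(S_1)_{ij}>0$, and this follows immediately from Theorem~\ref{t1.8}(1): you have $S_1\delta^G=\delta^H$ and $\delta^G=S_1^t\delta^H$ (by applying the two defining equations to $\textbf{1}$), so $\delta^H=P_\ell\,\delta^G$ for every permutation $P_\ell$ in a Birkhoff decomposition of $S_1$; since $(S_1)_{ij}>0$ forces $(P_\ell)_{ij}=1$ for some $\ell$, the degree equality on the support of $S_1$ follows and the diagonal terms cancel. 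Your hedged fallback (refine to blocks and check biregularity blockwise) is essentially the paper's proof, so either way the argument closes; but the direct version buys a shorter proof that bypasses the equitable-partition machinery entirely, at the cost of using the specifically $2$-uniform identity $M_GM_G^t=A_G+\mathrm{diag}(\delta^G)$, which is why the paper's blockwise argument, unlike yours, generalizes to Theorem~\ref{thm:frac-iso-hyper-main}.
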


\begin{proof}
We know that $S_{1}M_{G}=M_{H}S_{2}^{t}$ and
$M_{G}S_{2}=S_{1}^{t}M_{H}$ for some double stochastic matrices
$S_1$ and $S_2$. Firstly, we will show that we can assume that
$S_{1}$ and $S_{2}$ have a structure of blocks: $S_{1}=A_{1}\oplus
...\oplus A_{k}$ and $S_{2}=B_{1}\oplus ...\oplus B_{a}$ where
every $A_{i}$, $B_{i}$ is indecomposable (so strongly irreducible,
by Proposition~\ref{lem:strong}). The reason is that if
$PS_{1}Q=A_{1}\oplus ...\oplus A_{k}$ with $P$ and $Q$ permutation
matrices and $RS_{2}T=B_{1}\oplus ...\oplus B_{a}$ with $R$ and
$T$ permutation matrices, we can write:

\begin{equation*}
PS_{1}M_{G}R^{t}=PM_{H}S_{2}^{t}R^{t}
\end{equation*}
then
\begin{equation*}
(PS_{1}Q)(Q^{t}M_{G}R^{t})=(PM_{H}T)(T^{t}S_{2}^{t}R^{t})
\end{equation*}
but we have
\begin{equation*}
Q^{t}M_{G}S_{2}T=Q^{t}S_{1}^{t}M_{H}T
\end{equation*}
and then
\begin{equation*}
(Q^{t}M_{G}R^{t})(RS_{2}T)=(Q^{t}S_{1}^{t}P^{t})(PM_{H}T).
\end{equation*}
So, when we interchange rows and columns of $S_{1}$ and $S_{2}$ to
obtain a structure of blocks, $M_{G}$ and $M_{H}$ also interchange
rows and columns following theses rules: $M_{G}^{^{\prime
}}=Q^{t}M_{G}R^{t}$ and $M_{H}^{^{\prime }}=PM_{H}T$. We denote
$M_{G}$ and $M_{H}$ to these new matrices $M_{G}^{^{\prime }}$ and
$M_{H}^{^{\prime }}$.

Using the conditions $S_{1}M_{G}=M_{H}S_{2}^{t}$ and
$M_{G}S_{2}=S_{1}^{t}M_{H}$, we obtain:\\

\medskip

\noindent {\footnotesize
$$%
\begin{bmatrix}
A_{1} & 0 & ... & 0 \\
0 & A_{2} & ... & 0 \\
... & ... & ... & ... \\
0 & 0 & ... & A_{k}%
\end{bmatrix}%
\begin{bmatrix}
G_{11} & G_{12} & ... & G_{1a} \\
G_{21} & G_{22} & ... & G_{2a} \\
... & ... & ... & ... \\
G_{k1} & G_{k2} & ... & G_{ka}%
\end{bmatrix}%
=%
\begin{bmatrix}
H_{11} & H_{12} & ... & H_{1a} \\
H_{21} & H_{22} & ... & H_{2a} \\
... & ... & ... & ... \\
H_{k1} & H_{k2} & ... & H_{ka}%
\end{bmatrix}%
\begin{bmatrix}
B_{1}^{t} & 0 & ... & 0 \\
0 & B_{2}^{t} & ... & 0 \\
... & ... & ... & ... \\
0 & 0 & ... & B_{a}^{t}%
\end{bmatrix}%
$$

\medskip
\noindent
$$%
\begin{bmatrix}
G_{11} & G_{12} & ... & G_{1a} \\
G_{21} & G_{22} & ... & G_{2a} \\
... & ... & ... & ... \\
G_{k1} & G_{k2} & ... & G_{ka}%
\end{bmatrix}%
\begin{bmatrix}
B_{1} & 0 & ... & 0 \\
0 & B_{2} & ... & 0 \\
... & ... & ... & ... \\
0 & 0 & ... & B_{a}%
\end{bmatrix}%
=%
\begin{bmatrix}
A_{1}^{t} & 0 & ... & 0 \\
0 & A_{2}^{t} & ... & 0 \\
... & ... & ... & ... \\
0 & 0 & ... & A_{k}^{t}%
\end{bmatrix}%
\begin{bmatrix}
H_{11} & H_{12} & ... & H_{1a} \\
H_{21} & H_{22} & ... & H_{2a} \\
... & ... & ... & ... \\
H_{k1} & H_{k2} & ... & H_{ka}%
\end{bmatrix}%
$$
}
\medskip

Then $A_{i}G_{ij}=H_{ij}B_{j}^{t}$ and
$G_{ij}B_{j}=A_{i}^{t}H_{ij}$. Let us call $d_{ij}(G)=G_{ij}\cdot
\textbf{1}$ and $d_{ij}(H)=H_{ij}\cdot \textbf{1}$. We compute
$A_{i}G_{ij}\cdot \textbf{1}=H_{ij}B_{j}^{t}\cdot
\textbf{1}=H_{ij}\cdot \textbf{1}$.

Then $A_{i}d_{ij}(G)=d_{ij}(H)$ and as $G_{ij}\cdot
\textbf{1}=G_{ij}B_{j}\cdot \textbf{1}=A_{i}^{t}H_{ij}\cdot
\textbf{1}$ we have $d_{ij}(G)=A_{i}^{t}d_{ij}(H)$. But using
Theorem~\ref{t1.8}, we have that $d_{ij}(G)=d_{ij}(H)=c\cdot
\textbf{1}$ for some scalar $c$. On the other hand, renaming
$u_{ij}(G)=\textbf{1}^{t}\cdot G_{ij}$ and
$u_{ij}(H)=\textbf{1}^{t}\cdot H_{ij}$, we obtain that $u_{ij}(G)$
$=$ $\textbf{1}^{t}\cdot G_{ij}$ $=$ $\textbf{1}^{t}\cdot
A_{ij}G_{ij}$ $=$ $\textbf{1}^{t}\cdot H_{ij}B_{j}^{t}$ $=$
$u_{ij}(H)B_{ij}^{t}$ and $u_{ij}(G)B_{j}$ $=$
$\textbf{1}^{t}\cdot G_{ij}B_{j}$ $=$ $\textbf{1}^{t}\cdot
A_{i}^{t}H_{ij}$ $=$ $\textbf{1}^{t}\cdot H_{ij}$ $=$ $u_{ij}(H)$.
And again using Theorem~\ref{t1.8} for $u_{ij}(G)^t$ and
$u_{ij}(H)^t$, it holds that $u_{ij}(G)=u_{ij}(H)=e\cdot
\textbf{1}^{t}$. For 2-uniform hypergraphs, this last value $e$
could only be $0$, $1$ or $2$. The matrices $M_{G}$ and $M_{H}$
are divided into blocks $G_{ij}$ and $H_{ij}$ of the same
dimension where every row adds up the same and every column adds
up the same (that only could be $2$, $1$ or $0$ in the case of
$2$-uniform hypergraphs). Since the columns of $M_{G}$ and $M_{H}$
add up to $2$, we have the following cases: if the sum of the
columns of $G_{ij}$ (and also of $H_{ij}$) adds up to $2$, we
have the vertices of the regular classes, if the sum of the
columns of $G_{ij}$ adds up to $0$ then $G_{ij}$ is the null
matrix (so also is $H_{ij}$) and if the sum of the columns of
$G_{ij}$ adds up to $1$, we have another $G_{kj}$ where the sum is
also $1$, we have the connections between class $i$ and class $k$
(and the same happens to $H_{ij}$). In conclusion, $S_{1}$ and
$S_{2}$ induce the same equitable partition of the vertices and
edges of $G$ and $H$, respectively, so by
Theorem~\ref{thm:frac-iso-main}, $G\cong_f H$.
\end{proof}

As we show that, for graphs, $G\cong_f H $ if and only if $G\equiv
H$, we will use the notation $G\cong_f H$ from now on, instead of
$G\equiv H$. The concept of fractional isomorphism can be extended
to hypergraphs, and we denote $G\cong_f H$ if $G\equiv H$.

Likewise, in the case of graphs, the conditions that define the
fractional hypergraph isomorphism can be verified using a linear
programming model of polynomial size in the size of the
hypergraphs: $G\cong_f H$ if and only if there exist $S_{1}$ and
$S_{2}$ doubly stochastic matrices such that
$S_{1}M_{G}=M_{H}S_{2}^{t}$ and $M_{G}S_{2}=S_{1}^{t}M_{H}$. By
the polynomiality of linear programming~\cite{Kha-lp}, the
recognition of the fractional isomorphism of hypergraphs is
polynomial.

\subsubsection{Basic properties of fractional isomorphism of hypergraphs}

We present here the basic properties of fractional isomorphism of
hypergraphs, which are similar to those of graphs but also involve
hyperedge sizes, that are implicit for graphs since graphs are
$2$-uniform.

\begin{proposition}\label{basics}
If $G \cong_f H$ for two hypergraphs $G$ and $H$ then:
\begin{enumerate}
\item $G$ and $H$ have the same number of vertices;

\item $G$ and $H$ have the same number of hyperedges;

\item $G$ and $H$ have the same degree sequence;

\item $G$ and $H$ have the same multiset of hyperedge sizes;

\item $G^* \cong_f H^*$ (their dual hypergraphs are fractionally
isomorphic).
\end{enumerate}
\end{proposition}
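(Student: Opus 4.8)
The plan is to extract each of the five claims directly from the defining matrix equations $S_1 M_G = M_H S_2^t$ and $M_G S_2 = S_1^t M_H$, using the basic facts about doubly stochastic matrices (rows and columns sum to $1$) together with the two ``marginal'' identities $M_G \cdot \textbf{1} = \delta_G$ (the degree vector) and $\textbf{1}^t \cdot M_G = s_G^t$ (the hyperedge-size vector). The case with no hyperedges is trivial for (1)--(5), so I assume throughout that $G$ and $H$ have at least one hyperedge, and I may invoke the already-noted observation that then $M_G$ and $M_H$ have the same dimensions $n \times m$; this immediately gives (1) and (2).

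For (3), I would multiply $S_1 M_G = M_H S_2^t$ on the right by $\textbf{1}$. Since $S_2^t$ is doubly stochastic, $S_2^t \textbf{1} = \textbf{1}$, so the right-hand side is $M_H \textbf{1} = \delta_H$; the left-hand side is $S_1 (M_G \textbf{1}) = S_1 \delta_G$. Thus $\delta_H = S_1 \delta_G$, and symmetrically $\delta_G = S_1^t \delta_H$ (multiply the second equation on the right by $\textbf{1}$). Now I invoke Theorem~\ref{t1.8}(1): writing the Birkhoff decomposition $S_1 = \sum \alpha_i P_i$, the pair $\delta_H = S_1 \delta_G$, $\delta_G = S_1^t \delta_H$ forces $\delta_H = P_i \delta_G$ for every $i$, so $\delta_H$ is a permutation of $\delta_G$; hence $G$ and $H$ have the same degree sequence. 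For (4), I do the transposed computation: take $\textbf{1}^t (S_1 M_G) = \textbf{1}^t (M_H S_2^t)$. Since $\textbf{1}^t S_1 = \textbf{1}^t$, the left-hand side is $\textbf{1}^t M_G = s_G^t$; the right-hand side is $s_H^t S_2^t$, i.e.\ transposing, $s_G = S_2 s_H$, and symmetrically from $M_G S_2 = S_1^t M_H$ one gets $s_H = S_2^t s_G$. Again Theorem~\ref{t1.8}(1) applied to $S_2$ yields $s_G = Q_j s_H$ for each permutation $Q_j$ in a Birkhoff decomposition of $S_2$, so the multisets of hyperedge sizes coincide.

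For (5), the point is just to read the definition of $\equiv$ for the dual hypergraphs. By definition $M_{G^*} = M_G^t$ and $M_{H^*} = M_H^t$. Transposing $S_1 M_G = M_H S_2^t$ gives $M_G^t S_1^t = S_2 M_H^t$, and transposing $M_G S_2 = S_1^t M_H$ gives $S_2^t M_G^t = M_H^t S_1$. Setting $U_1 := S_2$ and $U_2 := S_1$ (both doubly stochastic), these read $U_1 M_{H^*} = M_{G^*} U_2^t$ and $M_{H^*} U_2 = U_1^t M_{G^*}$, which is exactly the condition $H^* \equiv G^*$; by symmetry of $\equiv$ (already proven), $G^* \cong_f H^*$. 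One should also dispatch the degenerate cases: if $G$ and $H$ have no hyperedges then their duals have no vertices and equally many (zero) hyperedges, and the edgeless clause of the definition applies. I expect no serious obstacle here — the only thing needing care is being consistent about which equation is multiplied on which side and keeping the transpose bookkeeping straight in part (5); the mild subtlety in (3) and (4) is that ``$\delta_H = S_1 \delta_G$ alone'' does not give equality of multisets (a doubly stochastic matrix can genuinely mix entries), which is precisely why the second equation and Theorem~\ref{t1.8}(1) are needed rather than a one-line averaging argument.
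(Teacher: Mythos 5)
Your proposal is correct and follows essentially the same route as the paper: (1)--(2) from dimension compatibility, (3)--(4) by multiplying the defining equations by $\textbf{1}$ on the appropriate side to get the two reciprocal relations $\delta_H=S_1\delta_G$, $\delta_G=S_1^t\delta_H$ (resp.\ $s_G=S_2 s_H$, $s_H=S_2^t s_G$) and then invoking Theorem~\ref{t1.8}, and (5) by transposing both equations. Your explicit handling of the edgeless case and the remark that a one-sided relation $\delta_H=S_1\delta_G$ would not suffice are small additions, but the argument is the same.
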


\begin{proof}
If $G\cong_f H$ then there exist $S_{1}$ and $S_{2}$ doubly
stochastic matrices such that $S_{1}M_{G}=M_{H}S_{2}^{t}$ and
$M_{G}S_{2}=S_{1}^{t}M_{H}$. In particular, to make the matrix
products well defined, $M_G$ and $M_H$ have to have the same
dimensions, so $G$ and $H$ have the same number of vertices and
hyperedges.

The degree sequence $d_G$ of $G$ can be obtained by multiplying
$M_G \cdot \textbf{1}$, while the multiset of hyperedge sizes
$u_G^t$ can be obtained by multiplying $\textbf{1}^t \cdot M_G$.

\begin{eqnarray*}
M_{G}S_{2}\cdot \textbf{1} & = & S_{1}^{t}M_{H} \cdot \textbf{1}\\
M_{G}\cdot \textbf{1} & = & S_{1}^{t} \cdot d_{H}\\
d_{G}& = & S_{1}^{t} \cdot d_{H}\\\end{eqnarray*}

Similarly, $S_1 \cdot d_G = d_H$. By Theorem~\ref{t1.8}, $d_{G}$
is a permutation of $d_{H}$.

Analogously,

\begin{eqnarray*}
\textbf{1}^t \cdot S_{1} M_{G} & = & \textbf{1}^t \cdot M_{H} S_{2}^{t}\\
\textbf{1}^t \cdot M_{G} & = & u_{H}^t \cdot S_{2}^{t}\\
u_G^t & = & u_{H}^t \cdot S_{2}^{t}\\
u_G & = & S_{2} \cdot u_{H}
\end{eqnarray*}

Similarly, $S_2^t \cdot u_G = u_H$. By Theorem~\ref{t1.8}, $u_{G}$
is a permutation of $u_{H}$.

Finally, since $M_{G^*} = M_G^t$, $M_{H^*} = M_H^t$, and
transposing the equations we have $M_{G}^{t}S_1^{t}=S_2M_{H}^{t}$
and $S_2^{t}M_{G}^{t}=M_{H}^{t}S_{1}$, it follows that $G^*
\cong_f H^*$.
\end{proof}

As a corollary, we have the following.

\begin{corollary}\label{thm:frac-iso-hyper-graph}
Let $G$ and $H$ be hypergraphs. If $G \cong_f H$ and $G$ is a
graph, then $H$ is also a graph.
\end{corollary}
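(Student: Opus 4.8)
The plan is to leverage parts (2) and (4) of Proposition~\ref{basics}, which are already available. Recall that a hypergraph $K$ is a graph precisely when every hyperedge has size exactly $2$, i.e., when $\textbf{1}^t \cdot M_K = 2 \cdot \textbf{1}^t$, or equivalently when the multiset of hyperedge sizes $u_K$ consists entirely of $2$'s. So the whole statement reduces to showing that the multiset of hyperedge sizes is preserved, together with the number of hyperedges.

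Concretely, I would argue as follows. Suppose $G \cong_f H$ and $G$ is a graph. First, the degenerate case in which $G$ has no hyperedges is immediate: then $G \equiv H$ forces $H$ to have the same number of vertices and no hyperedges, so $H$ is (trivially) a graph. Otherwise $G$ has at least one hyperedge. By Proposition~\ref{basics}(2), $H$ has the same number of hyperedges $m \geq 1$ as $G$, and by Proposition~\ref{basics}(4), $u_G$ is a permutation of $u_H$ (they have the same multiset of hyperedge sizes). Since $G$ is a graph, every entry of $u_G$ equals $2$; hence every entry of $u_H$ equals $2$ as well. Equivalently, $\textbf{1}^t \cdot M_H = 2 \cdot \textbf{1}^t$, which is exactly the condition characterizing $H$ as a graph (as noted in the discussion preceding the definition of the incidence matrix: a hypergraph is a graph iff every column of its incidence matrix has exactly two $1$'s). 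Therefore $H$ is a graph.

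I do not anticipate a genuine obstacle here: the work has essentially already been done in Proposition~\ref{basics}. The only point requiring a moment's care is the bookkeeping in the edgeless case, and making sure the equivalence ``every hyperedge size equals $2$'' $\iff$ ``$H$ is a graph'' is invoked cleanly rather than re-derived. One could also phrase the conclusion in one line: since fractional isomorphism of hypergraphs preserves the multiset of hyperedge sizes, it preserves $r$-uniformity for every $r$, and in particular $2$-uniformity, which is exactly being a graph.
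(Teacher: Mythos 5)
Your proposal is correct and follows exactly the route the paper intends: the corollary is stated as an immediate consequence of Proposition~\ref{basics}, and the relevant item is (4), preservation of the multiset of hyperedge sizes, combined with the paper's characterization that a hypergraph is a graph iff every column of its incidence matrix sums to $2$. Your handling of the edgeless case and the remark about preservation of $r$-uniformity are fine additions but do not change the argument.
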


So, we can strengthen Theorem~\ref{thm:equiv-congf}, requiring
only that $G$ be a graph, because $H$ will necessarily be a graph.

We can also extend the concept of equitable partition to
hypergraphs.

Let $H$ be an hypergraph. Let $P = \{V_{1}, \dots, V_{s}, X_1,
\dots, X_r\}$ be a partition of $V(H)$ and $E(H)$ (each $V_i$ is a
subset of $V(H)$ and each $X_j$ of $E(H)$, $r$ can be zero if $H$
has no hyperedges). The partition $P$ is \emph{equitable} if, for
every $1 \leq i \leq s$ and every $1 \leq j \leq r$, every vertex
of $V_i$ belongs to the same number of hyperedges of $X_j$, and
every hyperedge of $X_j$ contains the same number of vertices of
$V_i$.

Every hypergraph has a trivial equitable partition: each vertex
and each hyperedge is a class by itself. If $H$ is uniform and
regular, then $\{V(H),E(H)\}$ is an equitable partition.

Notice that, if we have an equitable partition $P = \{V_{1},
\dots, V_{s},$ $X_1, \dots, X_r\}$ and, for some $1 \leq j, j'\leq
r$ and for every $1 \leq i \leq s$, the hyperedges of $X_j$ and
$X_{j'}$ have the same number $n_i$ of vertices of the class
$V_i$, then we can define a coarser equitable partition replacing
$X_j$ and $X_{j'}$ by $X_j \cup X_{j'}$.

In particular, if $G$ is a graph, for every equitable partition $P
= \{V_{1}, \dots, V_{s},$ $X_1, \dots, X_r\}$, we have a coarser
one with the same vertex partition and so that each edge set is
either the set of all edges having both endpoints in some vertex
set $V_i$, or the set of all edges having one endpoint in $V_i$
and the other in $V_j$, for some $i,j$. So, the concept of
equitable partition for hypergraphs coincides in graphs with the
traditional definition.

For hypergraphs, the \emph{parameters} of an equitable partition
$P = \{V_{1}, \dots, V_{s},$ $X_1, \dots, X_r\}$ are a triple
$(v,D,U)$ where $v$ is a $s$-vector whose $i$-th entry is the size
of $V_{i}$, $D$ and $U$ are $(s\times r)$-matrices such that
$D_{ij}$ is the number of hyperedges of $X_j$ to which a vertex of
$V_i$ belongs, and $U_{ij}$ is the number of vertices of $V_i$
that an hyperedge of $X_j$ contains. When $r > 0$, the number of
edges $a_j$ in each $X_j$, which is always greater than zero, is
then $\frac{v_i D_{ij}}{U_{ij}}$, for any $i$ such that $U_{ij} >
0$.

We say that equitable partitions $P$ and $Q$ of hypergraphs $G$
and $H$ have the same parameters if we can index the sets in $P$
and $Q$ so that their parameters $(n,D,U)$ are identical. In such
a case we say that $G$ and $H$ have a \emph{common equitable
partition}.

Following the main ideas in the proofs of
Theorems~\ref{thm:congf-equiv} and~\ref{thm:equiv-congf}, we can
prove the following result.

\begin{theorem}\label{thm:frac-iso-hyper-main}
Let $G$ and $H$ be hypergraphs. Then $G \cong_f H$ if and only if
$G$ and $H$ have a common equitable partition.
\end{theorem}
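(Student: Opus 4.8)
The plan is to mimic the two-directional argument used for graphs in Theorems~\ref{thm:congf-equiv} and~\ref{thm:equiv-congf}, adapted to carry the extra hyperedge-size data encoded in the matrix $U$. For the direction ``common equitable partition $\Rightarrow G\cong_f H$'', I would take a common equitable partition with parameters $(v,D,U)$ and build $S_1$ and $S_2$ explicitly as direct sums of scaled all-ones blocks, exactly as in the proof of Theorem~\ref{thm:congf-equiv}. That is, reorder the rows of $M_G$ and $M_H$ according to the vertex classes $V_1,\dots,V_s$ and the columns according to the hyperedge classes $X_1,\dots,X_r$; then each block $M_{G,ij}$ has constant row sum $D_{ij}$ and constant column sum $U_{ij}$, and likewise for $M_{H,ij}$. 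Setting $S_1=\bigoplus_i \tfrac1{v_i}J_{v_i}$ and $S_2=\bigoplus_j \tfrac1{a_j}J_{a_j}$ (where $a_j$ is the common size of $X_j$ in $G$ and $H$), the products $S_1M_G$ and $M_HS_2^t$ both collapse to a block matrix whose $(i,j)$ block is $\tfrac{D_{ij}}{a_j}J_{v_i\times a_j}$ on one side and $\tfrac{U_{ij}}{v_i}J_{v_i\times a_j}$ on the other; these agree because $a_j U_{ij}=v_i D_{ij}$, which is precisely the counting identity for a biregular incidence pattern. The equation $M_GS_2=S_1^tM_H$ is handled symmetrically. One should note, as in the graph case, that blocks corresponding to empty classes are simply omitted from the direct sums.

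For the reverse direction ``$G\cong_f H \Rightarrow$ common equitable partition'', I would follow the proof of Theorem~\ref{thm:equiv-congf} almost verbatim. Starting from $S_1M_G=M_HS_2^t$ and $M_GS_2=S_1^tM_H$, use the block-decomposition trick with permutation matrices $P,Q,R,T$ to reduce to the case where $S_1=A_1\oplus\cdots\oplus A_k$ and $S_2=B_1\oplus\cdots\oplus B_a$ with each $A_i$, $B_j$ indecomposable (hence strongly irreducible by Proposition~\ref{lem:strong}), while $M_G$ and $M_H$ get correspondingly reshuffled into blocks $G_{ij}$, $H_{ij}$. Then $A_iG_{ij}=H_{ij}B_j^t$ and $G_{ij}B_j=A_i^tH_{ij}$, and applying Theorem~\ref{t1.8} to the row-sum vectors $d_{ij}(G)=G_{ij}\cdot\mathbf{1}$, $d_{ij}(H)=H_{ij}\cdot\mathbf{1}$ yields $d_{ij}(G)=d_{ij}(H)=c_{ij}\cdot\mathbf{1}$ for a scalar $c_{ij}$; applying it to the transposed column-sum vectors $u_{ij}(G)^t=(\mathbf{1}^tG_{ij})^t$, $u_{ij}(H)^t$ yields $u_{ij}(G)=u_{ij}(H)=e_{ij}\cdot\mathbf{1}^t$. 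Thus the vertex classes indexed by the $A_i$ and the hyperedge classes indexed by the $B_j$ form a partition of $V(G)$ and $E(G)$ in which every vertex of class $i$ lies in exactly $c_{ij}$ hyperedges of class $j$ and every hyperedge of class $j$ contains exactly $e_{ij}$ vertices of class $i$, i.e.\ an equitable partition; the same partition works for $H$ because the scalars $c_{ij}$, $e_{ij}$ come out equal on both sides. Since the $A_i$ (respectively $B_j$) also have all row and column sums $1$, no hyperedge class can be empty, so the parameter triples $(v,D,U)$ of the two partitions coincide after this common indexing.

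The main obstacle — and the only place where the hypergraph case genuinely differs from the graph case — is the handling of the column-sum scalars $e_{ij}$. In the $2$-uniform setting of Theorem~\ref{thm:equiv-congf} one exploits that $e_{ij}\in\{0,1,2\}$ to argue concretely that a ``$1$-block'' is paired with exactly one other ``$1$-block'', reconstructing the graph's edge structure. For general hypergraphs $e_{ij}$ can be any nonnegative integer, so that combinatorial case analysis is unavailable; instead I would argue abstractly that the block structure of $S_1$ and $S_2$ directly exhibits a candidate equitable partition and that the constancy of all $c_{ij}$ and $e_{ij}$ (forced by Theorem~\ref{t1.8}) is exactly the defining property of equitability, with the identity $a_j e_{ij}=v_i c_{ij}$ guaranteeing consistency of the $a_j$'s across classes. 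A minor bookkeeping point to address is the degenerate case of hypergraphs with no hyperedges, where $\cong_f$ reduces to equality of vertex counts and the empty partition of the edge set trivially matches; this should be dispatched in one sentence at the start, as in the proof of the proposition establishing that $\equiv$ is an equivalence relation.
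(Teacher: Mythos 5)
Your proposal is correct and follows essentially the same route as the paper: the forward direction builds $S_1=\bigoplus_i \frac{1}{v_i}J_{v_i}$ and $S_2=\bigoplus_j \frac{1}{a_j}J_{a_j}$ and matches the blocks via $U_{ij}a_j=D_{ij}v_i$, and the converse reduces $S_1$, $S_2$ to direct sums of indecomposable blocks and applies Theorem~\ref{t1.8} to the row- and column-sum vectors of each block $G_{ij}$, $H_{ij}$, exactly as in the paper. Your observation that the $\{0,1,2\}$ case analysis from the $2$-uniform setting is unnecessary here---because constancy of the $c_{ij}$ and $e_{ij}$ is already the definition of an equitable partition of a hypergraph---is precisely how the paper's proof concludes.
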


\begin{proof}
Suppose $G$ and $H$ have a common equitable partition with
parameters $(v,D,U)$. In particular, they have the same number of
vertices and hyperedges. Let $a_j$ be the number of edges of
$X_j$, for $1 \leq j \leq r$. Recall that for every $1 \leq i \leq
s$, $U_{ij}a_{j} = D_{ij}v_{i}$. Reordering rows and columns
according to the partition on each hypergraph, we can divide
$M_{G}$ into $sr$ blocks. For each block $M_{G,ij}$ of dimension
$v_i \times a_j$, with $1\leq i\leq s$, $1\leq j\leq r$,
$M_{G,ij}\cdot \textbf{1}=D_{ij}\cdot \textbf{1}$ and
$\textbf{1}^t\cdot M_{G,ij}=U_{ij}\cdot \textbf{1}^t$.

We define $S_{1}=\frac{1}{v_{1}} J_{v_{1}}\oplus \dots \oplus
\frac{1}{v_{s}}J_{v_{s}}$ ($S_{1}=S_{1}^{t}$) and  $S_{2}=$
$\frac{1}{a_{1}}J_{a_{1}}\oplus \dots \oplus
\frac{1}{a_{r}}J_{a_{r}}$ ($S_{2}=S_{2}^{t}$). Then

\medskip

\begin{align*}
S_{1}M_{G} & = \begin{bmatrix}
\frac{1}{v_{1}}J_{v_{1}} & 0 &  \ccdots & 0 \\
0 & \frac{1}{v_{2}}J_{v_{2}} &  \ccdots & 0 \\
\ccdots & \ccdots & \ccdots & \ccdots  \\
0 & 0 &  \ccdots & \frac{1}{v_{k}}J_{v_{k}}%
\end{bmatrix}
\begin{bmatrix}
M_{G,11} & M_{G,12} & \ccdots & M_{G,1r} \\
M_{G,21} & M_{G,22} & \ccdots & M_{G,2r} \\
\ccdots &  \ccdots & \ccdots & \ccdots  \\
M_{G,s1} & M_{G,s2} & \ccdots & M_{G,sr}%
\end{bmatrix}\\
& =\begin{bmatrix} \frac{U_{11}}{v_{1}}J_{v_{1}\times a_{1}} &
\frac{U_{12}}{v_{1}}J_{v_{1}\times a_{2}} & \ccdots  &
\frac{U_{1r}}{v_{1}}J_{v_{1}\times a_{r}}
\\
\frac{U_{21}}{v_{2}}J_{v_{2}\times a_{1}} &
\frac{U_{22}}{v_{2}}J_{v_{2}\times a_{2}} & \ccdots  &
\frac{U_{2r}}{v_{2}}J_{v_{2}\times a_{r}}
\\
\ccdots & \ccdots & \ccdots & \ccdots \\
\frac{U_{s1}}{v_{s}}J_{v_{s}\times a_{1}} &
\frac{U_{s2}}{v_{s}}J_{v_{s}\times a_{2}} & \ccdots  &
\frac{U_{sr}}{v_{s}}J_{v_{s}\times a_{r}}
\end{bmatrix}%
\end{align*}

\begin{align*}
M_{H}S_{2}^{t} & =%
\begin{bmatrix}
M_{H,11} & M_{H,12} & \ccdots  & M_{H,1r} \\
M_{H,21} & M_{H,22} & \ccdots  & M_{H,2r} \\
\ccdots & \ccdots & \ccdots & \ccdots  \\
M_{H,s1} & M_{H,s2} & \ccdots & M_{H,sr}%
\end{bmatrix}%
\begin{bmatrix}
\frac{1}{a_{1}}J_{a_{1}} & 0 &  \ccdots & 0 \\
0 & \frac{1}{a_{2}}J_{a_{2}} &  \ccdots & 0 \\
\ccdots & \ccdots  & \ccdots & \ccdots  \\
0 & 0 &  \ccdots & \frac{1}{a_{r}}J_{a_{r}}%
\end{bmatrix}\\
& =
\begin{bmatrix}
\frac{D_{11}}{a_{1}}J_{v_{1}\times a_{1}} & \frac{D_{12}}{a_{2}}J_{v_{1}\times a_{2}} &  \ccdots  & \frac{%
D_{1r}}{a_{r}}J_{v_{1}\times a_{r}} \\
\frac{D_{21}}{a_{1}}J_{v_{2}\times a_{1}} & \frac{D_{22}}{a_{2}}J_{v_{2}\times a_{2}} &  \ccdots  & \frac{%
D_{2r}}{a_{r}}J_{v_{2}\times a_{r}} \\
\ccdots & \ccdots & \ccdots  & \ccdots \\
\frac{D_{s1}}{a_{1}}J_{v_{s}\times a_{1}} & \frac{D_{s2}}{a_{2}}J_{v_{s}\times a_{2}} &  \ccdots  & \frac{%
D_{sr}}{a_{r}}J_{v_{s}\times a_{r}}%
\end{bmatrix}%
\end{align*}

So, $S_{1}M_{G}=M_{H}S_{2}^{t}$ because
$\frac{U_{ij}}{v_{i}}=\frac{D_{ij}}{a_{j}}$ for $1\leq i\leq s$
and $1 \leq j \leq r$. In the same way, we have
$S_{1}^{t}M_{H}=M_{G}S_{2}$.

Now, suppose $G \cong_f H$. We know that
$S_{1}M_{G}=M_{H}S_{2}^{t}$ and $M_{G}S_{2}=S_{1}^{t}M_{H}$ for
some double stochastic matrices $S_1$ and $S_2$. Repeating the
reasoning in the proof of Theorem~\ref{thm:equiv-congf}, we can
assume that $S_{1}$ and $S_{2}$ have a structure of blocks:
$S_{1}=A_{1}\oplus ...\oplus A_{s}$ and $S_{2}=B_{1}\oplus
...\oplus B_{r}$ where every $A_{i}$, $B_{i}$ is strongly
irreducible. Let $v_i$ be the number of rows and columns of $A_i$,
for $1 \leq i \leq s$, and $a_i$ be the number of rows and columns
of $B_i$, for $1 \leq i \leq r$. We can partition $M_G$ and $M_H$
into ($v_i \times a_j$)-blocks $G_{ij}$ and $H_{ij}$,
respectively. Using the conditions $S_{1}M_{G}=M_{H}S_{2}^{t}$ and
$M_{G}S_{2}=S_{1}^{t}M_{H}$, we obtain:\\

\medskip

\noindent {\footnotesize
$$%
\begin{bmatrix}
A_{1} & 0 & ... & 0 \\
0 & A_{2} & ... & 0 \\
... & ... & ... & ... \\
0 & 0 & ... & A_{s}%
\end{bmatrix}%
\begin{bmatrix}
G_{11} & G_{12} & ... & G_{1r} \\
G_{21} & G_{22} & ... & G_{2r} \\
... & ... & ... & ... \\
G_{s1} & G_{s2} & ... & G_{sr}%
\end{bmatrix}%
=%
\begin{bmatrix}
H_{11} & H_{12} & ... & H_{1r} \\
H_{21} & H_{22} & ... & H_{2r} \\
... & ... & ... & ... \\
H_{s1} & H_{s2} & ... & H_{sr}%
\end{bmatrix}%
\begin{bmatrix}
B_{1}^{t} & 0 & ... & 0 \\
0 & B_{2}^{t} & ... & 0 \\
... & ... & ... & ... \\
0 & 0 & ... & B_{r}^{t}%
\end{bmatrix}%
$$

\medskip
\noindent
$$%
\begin{bmatrix}
G_{11} & G_{12} & ... & G_{1r} \\
G_{21} & G_{22} & ... & G_{2r} \\
... & ... & ... & ... \\
G_{s1} & G_{s2} & ... & G_{sr}%
\end{bmatrix}%
\begin{bmatrix}
B_{1} & 0 & ... & 0 \\
0 & B_{2} & ... & 0 \\
... & ... & ... & ... \\
0 & 0 & ... & B_{r}%
\end{bmatrix}%
=%
\begin{bmatrix}
A_{1}^{t} & 0 & ... & 0 \\
0 & A_{2}^{t} & ... & 0 \\
... & ... & ... & ... \\
0 & 0 & ... & A_{s}^{t}%
\end{bmatrix}%
\begin{bmatrix}
H_{11} & H_{12} & ... & H_{1r} \\
H_{21} & H_{22} & ... & H_{2r} \\
... & ... & ... & ... \\
H_{s1} & H_{s2} & ... & H_{sr}%
\end{bmatrix}%
$$
}
\medskip

Then $A_{i}G_{ij}=H_{ij}B_{j}^{t}$ and
$G_{ij}B_{j}=A_{i}^{t}H_{ij}$. Let us call $d_{ij}(G)=G_{ij}\cdot
\textbf{1}$ and $d_{ij}(H)=H_{ij}\cdot \textbf{1}$. We compute
$A_{i}G_{ij}\cdot \textbf{1}=H_{ij}B_{j}^{t}\cdot
\textbf{1}=H_{ij}\cdot \textbf{1}$.

Then $A_{i}d_{ij}(G)=d_{ij}(H)$ and as $G_{ij}\cdot
\textbf{1}=G_{ij}B_{j}\cdot \textbf{1}=A_{i}^{t}H_{ij}\cdot
\textbf{1}$ we have $d_{ij}(G)=A_{i}^{t}d_{ij}(H)$. But using
Theorem~\ref{t1.8}, we have that $d_{ij}(G)=d_{ij}(H)=D_{ij}\cdot
\textbf{1}$ for some scalar $D_{ij}$. On the other hand, renaming
$u_{ij}(G)=\textbf{1}^{t}\cdot G_{ij}$ and
$u_{ij}(H)=\textbf{1}^{t}\cdot H_{ij}$, we obtain that $u_{ij}(G)$
$=$ $\textbf{1}^{t}G_{ij}$ $=$ $\textbf{1}^{t}A_{ij}G_{ij}$ $=$
$\textbf{1}^{t}H_{ij}B_{j}^{t}$ $=$ $u_{ij}(H)B_{ij}^{t}$ and
$u_{ij}(G)B_{j}$ $=$ $\textbf{1}^{t}G_{ij}B_{j}$ $=$
$\textbf{1}^{t}A_{i}^{t}H_{ij}$ $=$ $\textbf{1}^{t}H_{ij}$ $=$
$u_{ij}(H)$. And again using Theorem~\ref{t1.8} for $u_{ij}(G)^t$
and $u_{ij}(H)^t$, it holds that $u_{ij}(G)=u_{ij}(H)=U_{ij}\cdot
\textbf{1}^{t}$ for some scalar $U_{ij}$. Thus, the partition of
the vertices and of the hyperedges of $G$ and $H$ induced by the
blocks $G_{ij}$ and $H_{ij}$ of the respective incidence matrices
is a common equitable partition of $G$ and $H$.
\end{proof}

\begin{corollary}
If $G$ and $H$ are two $k$-uniform $r$-regular hypergraphs with
$n$ vertices, then $G\cong_f H$.
\end{corollary}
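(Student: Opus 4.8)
The plan is to reduce the statement immediately to Theorem~\ref{thm:frac-iso-hyper-main}: it suffices to produce a common equitable partition of $G$ and $H$. First I would dispose of the degenerate case $r=0$. If every vertex lies in $0$ hyperedges, then both $G$ and $H$ have $n$ vertices and no hyperedges, so $G\cong_f H$ holds directly from the definition of $\equiv$. Hence from now on assume $r>0$; then each hypergraph has at least one hyperedge, and counting vertex--hyperedge incidences in two ways gives that the common number of hyperedges is $m := |E(G)| = \frac{nr}{k} = |E(H)|$.

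Next I would observe that the coarsest conceivable partition $P=\{V(G),E(G)\}$, with a single vertex class and a single hyperedge class, is equitable for any $k$-uniform $r$-regular hypergraph: every vertex of $V(G)$ belongs to exactly $r$ hyperedges of $E(G)$, and every hyperedge of $E(G)$ contains exactly $k$ vertices of $V(G)$. Its parameters are the triple $(v,D,U)=((n),(r),(k))$, and the compatibility relation $U_{11}a_1 = km = nr = D_{11}v_1$ from the definition of parameters is satisfied. The analogous partition of $H$ has the identical parameter triple, since that triple depends only on $n$, $k$, and $r$. Therefore $G$ and $H$ have a common equitable partition, and Theorem~\ref{thm:frac-iso-hyper-main} yields $G\cong_f H$.

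As an alternative that makes the witnesses explicit --- essentially the construction in the proof of Theorem~\ref{thm:frac-iso-hyper-main} specialised to a one-block partition, and the direct generalisation of the proof of the proposition on $n$-vertex $r$-regular graphs --- one may take $S_{1}=\frac{1}{n}J_{n}$ and $S_{2}=\frac{1}{m}J_{m}$. Then $S_{1}M_{G}=\frac{1}{n}J_{n}M_{G}=\frac{k}{n}J_{n\times m}$ because every column of $M_{G}$ sums to $k$, while $M_{H}S_{2}^{t}=\frac{1}{m}M_{H}J_{m}=\frac{r}{m}J_{n\times m}$ because every row of $M_{H}$ sums to $r$; since $\frac{k}{n}=\frac{r}{m}$ these agree, and the symmetric computation gives $M_{G}S_{2}=S_{1}^{t}M_{H}$. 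This is exactly the earlier regular-graph argument with the constant $2$ replaced by $k$.

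I do not anticipate any real obstacle: the result is a one-line corollary of Theorem~\ref{thm:frac-iso-hyper-main}. The only points meriting a word of care are the handling of the edgeless case $r=0$ and the (automatic) verification that the induced parameters genuinely coincide for $G$ and $H$, which holds because they are determined by $n$, $k$, and $r$ alone.
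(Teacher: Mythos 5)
Your proposal is correct and matches the paper's intended argument: the paper states this as an immediate corollary of Theorem~\ref{thm:frac-iso-hyper-main}, having already observed that $\{V(H),E(H)\}$ is an equitable partition of any uniform regular hypergraph, and its parameters $(n,r,k)$ clearly coincide for $G$ and $H$. Your explicit construction of $S_1=\frac{1}{n}J_n$ and $S_2=\frac{1}{m}J_m$ and your handling of $r=0$ are consistent extra detail, not a different route.
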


Notice that the fact that two hypergraphs are fractionally
isomorphic, does not imply that their 2-sections are fractionally
isomorphic graphs. Consider the following $4$-uniform $2$-regular
hypergraphs on $8$ vertices $v_1,\dots,v_8$ (thus, fractionally
isomorphic). The hypergraph $H$ has hyperedges
$\{v_1,v_2,v_3,v_4\}$, $\{v_3,v_4,v_5,v_6\}$,
$\{v_5,v_6,v_7,v_8\}$, $\{v_1,v_2,v_7,v_8\}$; the hypergraph $G$
has hyperedges $\{v_1,v_2,v_3,v_4\}$, $\{v_1,v_2,v_3,v_8\}$,
$\{v_4,v_5,v_6,v_7\}$, $\{v_5,v_6,v_7,v_8\}$. The $2$-section of
$H$ is $5$-regular, while the $2$-section of $G$ has $6$ vertices
of degree $4$ and $2$ vertices of degree $6$, so they are not
fractionally isomorphic.

\subsubsection{Bipartite representation and fractional isomorphism}

It is known that every hypergraph can be described by a bipartite
graph where the two parts of the bipartition correspond to the
vertices and hyperedges of the hypergraph, respectively.

A possible question arises with this correspondence: Do two
fractionally isomorphic hypergraphs correspond to fractionally
isomorphic bipartite graphs? And if we have two bipartite graphs
that arise from hypergraphs, if they are fractionally isomorphic,
are the hypergraphs fractionally isomorphic?

Given a hypergraph $G$, we may construct a bipartite graph $B_{G}$
where the first part of $B_{G}$ is in correspondence with the
vertices of $G$ and the second part of $B_{G}$ is in
correspondence with the hyperedges of $G$. We have an edge in
$B_{G}$ that links a vertex in the first part to a vertex in the
second part if and only if the corresponding vertex of $G$ belongs
to the corresponding hyperedge in $G$. If $M_{G}$ is the
vertex-hyperedge incidence matrix of $G$ then it is
straightforward that $A=\begin{bmatrix}
0 & M_{G} \\
M_{G}^t & 0 \\
\end{bmatrix}%
$ is the adjacency matrix of $B_{G}$.
\begin{proposition}
Let $G$ and $H$ be two fractionally isomorphic hypergraphs
($G\cong_f H$) and let $B_{G}$ and $B_{H}$ be the two bipartite
graphs that correspond to $G$ and $H$, respectively. Then
$B_{G}\cong_f B_{H}$.
\end{proposition}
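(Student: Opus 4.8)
The plan is to build the doubly stochastic matrix witnessing $B_G \cong_f B_H$ directly out of the two doubly stochastic matrices $S_1, S_2$ that witness $G \cong_f H$. Recall that the adjacency matrices of $B_G$ and $B_H$ have the block form
\begin{equation*}
A_{B_G}=\begin{bmatrix} 0 & M_G \\ M_G^t & 0 \end{bmatrix}, \qquad
A_{B_H}=\begin{bmatrix} 0 & M_H \\ M_H^t & 0 \end{bmatrix},
\end{equation*}
and that $G \cong_f H$ gives us doubly stochastic $S_1$ (size $n\times n$) and $S_2$ (size $a\times a$) with $S_1 M_G = M_H S_2^t$ and $M_G S_2 = S_1^t M_H$. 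First I would set
\begin{equation*}
S = S_1 \oplus S_2 = \begin{bmatrix} S_1 & 0 \\ 0 & S_2 \end{bmatrix},
\end{equation*}
which is doubly stochastic of size $(n+a)\times(n+a)$ since a direct sum of doubly stochastic matrices is doubly stochastic, and its dimensions match the adjacency matrices of $B_G$ and $B_H$. Then I would verify $A_{B_G} S = S A_{B_H}$ by multiplying out the blocks.

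The block computation is the core of the argument but it is short. On the one hand,
\begin{equation*}
A_{B_G} S = \begin{bmatrix} 0 & M_G \\ M_G^t & 0 \end{bmatrix}\begin{bmatrix} S_1 & 0 \\ 0 & S_2 \end{bmatrix} = \begin{bmatrix} 0 & M_G S_2 \\ M_G^t S_1 & 0 \end{bmatrix},
\end{equation*}
and on the other hand,
\begin{equation*}
S A_{B_H} = \begin{bmatrix} S_1 & 0 \\ 0 & S_2 \end{bmatrix}\begin{bmatrix} 0 & M_H \\ M_H^t & 0 \end{bmatrix} = \begin{bmatrix} 0 & S_1 M_H \\ S_2 M_H^t & 0 \end{bmatrix}.
\end{equation*}
So the claim $A_{B_G} S = S A_{B_H}$ reduces to the two identities $M_G S_2 = S_1 M_H$ and $M_G^t S_1 = S_2 M_H^t$. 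The second of these is exactly the transpose of the first, so it suffices to establish $M_G S_2 = S_1 M_H$. This is not literally one of the two given equations (which are $M_G S_2 = S_1^t M_H$ and $S_1 M_G = M_H S_2^t$), so there is a small gap to bridge.

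The one genuine obstacle is reconciling $S_1^t$ versus $S_1$ (and $S_2^t$ versus $S_2$). I expect this is handled exactly as in the proofs of Theorems~\ref{thm:congf-equiv} and~\ref{thm:equiv-congf}: after block-diagonalizing and applying Theorem~\ref{t1.8} to the induced partition, one may take $S_1$ and $S_2$ to be \emph{symmetric} doubly stochastic matrices (each a direct sum of blocks of the form $\frac{1}{k}J_k$), built from a common equitable partition of $G$ and $H$ — indeed this is precisely what the explicit constructions in those proofs produce. With $S_1 = S_1^t$ and $S_2 = S_2^t$, the given equation $M_G S_2 = S_1^t M_H$ becomes $M_G S_2 = S_1 M_H$, and $S_1 M_G = M_H S_2^t$ transposes to $M_G^t S_1 = S_2 M_H^t$, completing the verification. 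Hence $S = S_1 \oplus S_2$ is a doubly stochastic matrix with $A_{B_G} S = S A_{B_H}$, which means $B_G \cong_f B_H$.
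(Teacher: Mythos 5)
Your construction is correct in outcome but takes a detour the paper avoids. The paper's proof uses the same block-diagonal idea but chooses $S=\begin{bmatrix} S_1 & 0\\ 0 & S_2^{t}\end{bmatrix}$ rather than $S_1\oplus S_2$; with that choice the two off-diagonal block identities needed for $S A_{B_G}=A_{B_H}S$ are \emph{verbatim} the two hypotheses $S_1M_G=M_HS_2^{t}$ and (the transpose of) $M_GS_2=S_1^{t}M_H$, so no symmetry of $S_1,S_2$ is needed and the proof is a two-line block multiplication. Your version, by insisting on $S_1\oplus S_2$, creates the $S_1$-versus-$S_1^{t}$ mismatch and then resolves it by upgrading the witnesses to symmetric ones via the common-equitable-partition characterization (Theorem~\ref{thm:frac-iso-hyper-main}). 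That is a legitimate argument --- the forward direction of that theorem does produce $S_1=S_1^{t}$ and $S_2=S_2^{t}$ as direct sums of blocks $\frac{1}{k}J_k$ --- but it invokes the heaviest theorem in the section to patch what a single transpose in the definition of $S$ fixes for free. One small inaccuracy along the way: the identity $M_G^{t}S_1=S_2M_H^{t}$ is \emph{not} the transpose of $M_GS_2=S_1M_H$ (transposing the latter gives $S_2^{t}M_G^{t}=M_H^{t}S_1^{t}$, a different product order), so it does not ``suffice to establish'' only the first identity; fortunately your final paragraph derives each of the two needed identities from a different one of the two hypotheses, which is what actually closes the argument.
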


\begin{proof}
Since $G$ and $H$ are fractionally isomorphic, then there exist
doubly stochastic matrices $S_{1}$ and $S_{2}$ such that
$S_{1}M_{G}=M_{H}S_{2}^{t}$ and $M_{G}S_{2}=S_{1}^{t}M_{H}$. We
define $A=\begin{bmatrix}
0 & M_{G} \\
M_{G}^t & 0 \\
\end{bmatrix}%
$, $B=\begin{bmatrix}
0 & M_{H} \\
M_{H}^t & 0 \\
\end{bmatrix}%
$ and  $S=\begin{bmatrix}
S_{1} &0 \\
0&S_{2}^t  \\
\end{bmatrix}$.
The matrix $S$ is doubly stochastic, $SA=%
\begin{bmatrix}
S_{1} & 0 \\
0& S_{2}^t  \\
\end{bmatrix}%
\begin{bmatrix}
0 &M_{G} \\
M_{G}^t & 0 \\
\end{bmatrix}%
=
\begin{bmatrix}
0 &S_{1}M_{G} \\
S_{2}^tM_{G}^t & 0 \\
\end{bmatrix}%
$,
and $BS=%
\begin{bmatrix}
0 &M_{H} \\
M_{H}^t & 0 \\
\end{bmatrix}%
\begin{bmatrix}
S_{1} & 0 \\
0& S_{2}^t  \\
\end{bmatrix}%
=
\begin{bmatrix}
0 &M_{H}S_{2}^t \\
M_{H}^tS_{1} & 0 \\
\end{bmatrix}%
$. Then $SA=BS$ and $B_{G}\cong_f B_{H}$.
\end{proof}

On the other hand, the converse is not true:  $B_{G}\cong_f
B_{H}$, does not imply that $G$ and $H$ are fractionally
isomorphic. Indeed, for every hypergraph $H$, $B_{H} = B_{H^*}$
(as graphs, even if the sets have different meaning in the
representation) and, in general, $H$ and $H^*$ have different
number of vertices and edges, so they are not fractionally
isomorphic.

We can find also counterexamples with the same number of vertices
and hyperedges. Let $H_1$ be the complete graph on four vertices
and $H_2$ be the $5$-vertex graph consisting on an induced path of
four vertices plus a universal vertex (known as \emph{gem}). Both
the disjoint unions $H_1 \cup H_2^*$ and $H_1^* \cup H_2$ have
$11$ vertices and $11$ hyperedges. It is clear that $B_{H_1 \cup
H_2^*} = B_{H_1^* \cup H_2}$ (as graphs). However, their degree
sequences differ: $d_1(H_1 \cup H_2^*) =
\{2,2,2,2,2,2,2,3,3,3,3\}$ while $d_1(H_1^* \cup H_2) =
\{2,2,2,2,2,2,2,2,3,3,4\}$, so they are not fractionally
isomorphic.

\section{Fractional invariants}

In this section we will show that several fractional invariants of
graphs and hypergraphs are preserved by fractional (hyper)graph
isomorphism. We will deal with fractional versions of packing
(independent set), edge covering, matching, and transversal
(vertex covering), which have been widely studied in the
literature (see, for example,
\cite{A-K-Z-fracMatch,Ber-hypcov,B-Pul-fracmatch,C-K-O-fracmatch,C-F-G-G-fracov,Cor-comb-opt,C-H-fracmatch,Dom-thesis,Far-bal,Fur-framhyp,Fur-match,L-L-fracmatch,Lov-Plu-matching,L-M-D-F-frac,M-S-T-fracmatch,Pul-fracmatch,Schrijver03,Sch-Sey-Lov,U-S-fgt}).

Given a hypergraph $H = (S,X)$, a \emph{covering} of $H$ is a
collection of hyperedges $X_1, X_2, \dots, X_j$ so that $S =  X_1
\cup \dots \cup X_j$. The least $j$ for which this is possible,
the smallest size of a covering, is called the \emph{covering
number} of $H$ and is denoted $k(H)$. An element $s \in S$ is
called \emph{exposed} if it is in no hyperedge. If $H$ has an
exposed vertex, then no covering of $H$ exists and $k(H) =
\infty$. The covering problem can be formulated as an integer
program, ``minimize $\textbf{1}^t \cdot x$ subject to $M_H \cdot x
\geq \textbf{1}$, $x \in \{0,1\}^{|X|}$''. Furthermore, the
variables in this and subsequent linear programs are tacitly
assumed to be nonnegative. This integer problem can be relaxed to
calculate $k_{f}(H)$ (the \emph{fractional covering number} of
$H$), as the linear program ``minimize $\textbf{1}^{t}\cdot x$
subject to $M_{H}\cdot x\geq \textbf{1}$, $x \geq 0$'' (each $x_i$
can take any real nonnegative value). Any feasible solution of the
integer program is also a feasible solution of the linear program,
so $k(H)\geq k_{f}(H)$.

A \emph{packing} of an hypergraph $H=(S,X)$ is a subset of
vertices $Y\subseteq S$ with the property that no two elements of
$Y$ are in the same hyperedge of $H$. The \emph{packing number}
$p(H)$ is the maximum number of elements that a packing can have,
an can be formulated by the integer program ``maximize
$\textbf{1}^{t}y$ subject to $M_{H}^{t}y\leq \textbf{1}$, $y \in
\{0,1\}^{|S|}$''. Again, relaxing this problem, we can compute
$p_{f}(H)$ (the \emph{fractional packing number} of $H$) as the
following linear program: ``maximize $\textbf{1}^{t}y$ subject to
$M_{H}^{t}y\leq \textbf{1}, y\geq 0$''. In the same way, we have
that $p_{f}(H)\geq p(H)$. Notice also that if $H$ has an exposed
vertex $i$, then $p_{f}(H)=\infty$, since the value of $x_i$ is
not upper bounded.

The linear programs above are dual, so $p_{f}(G)=k_{f}(G)$ (see,
for example, \cite{Ch-LP}). Then, we have $p(G)\leq
p_{f}(G)=k_{f}(G)\leq k(G)$.

In a  graph $G$ without isolated vertices (seeing it as a
2-uniform hypergraph), the packing number $p(G)$ corresponds to
the independence number $\alpha (G)$. So, we can define
$\alpha_{f}(G)=p_{f}(G)$. In the same way, $k(G)$ is the minimum
number of edges we can choose to cover every vertex in $G$, the
\emph{edge cover number} of $G$, and $k_f(G)$ is a fractional
version of it.

Given a graph $G$, we can construct a hypergraph $H_{G}$ where the
hyperedges of $H_{G}$ are the independent sets of $G$. Given
$A_G$, the adjacency matrix of $G$, we can compute $M_{H_{G}}$. In
this way, the chromatic number of $G$ corresponds to the covering
number of $H_{G}$ ($\chi (G)=$ $k(H_{G})$). In the same way, we
can define $\chi_{f}(G)=k_{f}(H_{G})$ and it is possible to show
that this definition coincides with other ways of define the
\emph{fractional chromatic number} of $G$~\cite{U-S-fgt}. The
clique number of $G$ corresponds to the packing number of $H_{G}$
($\omega (G)=$ $p(H_{G})$), and the \emph{fractional clique
number} $\omega_f(G)$ is defined as $p_f(H_{G})$. In particular,
$\omega(G) \leq \omega_f(G) = \chi_f(G) \leq \chi(G)$.

We can also consider the dual hypergraph $H^*$ of the hypergraph
$H$. A \emph{matching} of $H$ is a set of pairwise disjoint
hyperedges, and the \emph{matching number} of $H$, denoted
$\mu(H)$, is the maximum pairwise disjoint number of hyperedges of
$H$. So, $\mu(H)$ is the packing number of $H^*$. Also, we can
define $\mu_{f}(H)=p_{f}(H^*)$, and this definition coincides with
other ways to define the fractional matching number of graphs and
hypergraphs~\cite{Be-hyper,U-S-fgt}. A graph $G$ admits a
\emph{perfect fractional matching} when
$\mu_{f}(G)=\frac{1}{2}|V(G)|$.

A \emph{transversal} of $H$ is a set of vertices such that each
hyperedge contains at least one of them, and the \emph{transversal
number} of $H$, denoted $\tau(H)$, is the minimum cardinality of a
transversal of $H$. So, $\tau(H)= k(H^*)$. Also, we can define
$\tau_{f}(H)=k_{f}(H^*)$. For graphs, the transversal (integer or
fractional) is better known as \emph{vertex cover} of the graph.

The main result in this section is the following.

\begin{theorem}\label{thm:cover} Let $G$ and $H$ be hypergraphs.
If $G\cong_f H$, then $k_{f}(G)=k_{f}(H)$.
\end{theorem}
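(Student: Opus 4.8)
The plan is to prove both inequalities $k_{f}(H)\le k_{f}(G)$ and $k_{f}(G)\le k_{f}(H)$ by transporting feasible solutions of one fractional covering LP to the other, using the doubly stochastic matrices that witness $G\cong_f H$. First I would dispose of the degenerate cases: by Proposition~\ref{basics}, $G$ and $H$ have the same degree sequence, so $G$ has an exposed vertex (a vertex of degree $0$) if and only if $H$ does, and in that case $k_{f}(G)=k_{f}(H)=\infty$; the case of hypergraphs with no hyperedges is contained in this one (or is trivial when there are also no vertices). So assume from now on that neither $G$ nor $H$ has an exposed vertex. Then the LP ``minimize $\textbf{1}^{t}\cdot x$ subject to $M_{G}x\ge \textbf{1}$, $x\ge 0$'' has a nonempty feasible region (for instance $x=\textbf{1}$ works, since every vertex lies in at least one hyperedge) and objective bounded below by $0$, so an optimal solution is attained; likewise for $H$.

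Let $S_{1}$, $S_{2}$ be doubly stochastic matrices with $S_{1}M_{G}=M_{H}S_{2}^{t}$ and $M_{G}S_{2}=S_{1}^{t}M_{H}$, and let $x$ be an optimal solution of the covering LP of $G$, so $x\ge 0$, $M_{G}x\ge \textbf{1}$, and $\textbf{1}^{t}x=k_{f}(G)$. Put $y:=S_{2}^{t}x$. Then $y\ge 0$ because $S_{2}^{t}$ and $x$ are nonnegative. Using $M_{H}S_{2}^{t}=S_{1}M_{G}$,
\[
M_{H}y = M_{H}S_{2}^{t}x = S_{1}(M_{G}x) \ge S_{1}\textbf{1} = \textbf{1},
\]
where the inequality holds because $S_{1}$ is nonnegative and $M_{G}x\ge \textbf{1}$, and the final equality is the row-stochasticity of $S_{1}$. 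Thus $y$ is feasible for the covering LP of $H$, and its value is
\[
\textbf{1}^{t}y = \textbf{1}^{t}S_{2}^{t}x = (S_{2}\textbf{1})^{t}x = \textbf{1}^{t}x = k_{f}(G),
\]
using $S_{2}\textbf{1}=\textbf{1}$. Hence $k_{f}(H)\le k_{f}(G)$.

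For the reverse inequality, given any feasible $z$ for the covering LP of $H$ (so $z\ge 0$ and $M_{H}z\ge \textbf{1}$), set $w:=S_{2}z\ge 0$; then, using the second equation $M_{G}S_{2}=S_{1}^{t}M_{H}$, we get $M_{G}w=S_{1}^{t}(M_{H}z)\ge S_{1}^{t}\textbf{1}=\textbf{1}$ and $\textbf{1}^{t}w=\textbf{1}^{t}S_{2}z=\textbf{1}^{t}z$, so $w$ is feasible for the covering LP of $G$ with the same value; therefore $k_{f}(G)\le k_{f}(H)$. Combining the two inequalities gives $k_{f}(G)=k_{f}(H)$. There is no serious obstacle here: the crux is simply that $S_{1}$, being nonnegative and row-stochastic, is monotone and maps vectors $\ge \textbf{1}$ to vectors $\ge \textbf{1}$, while $S_{2}$ (respectively $S_{2}^{t}$), being column-stochastic, preserves the total weight $\textbf{1}^{t}(\cdot)$. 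The only points needing a little care are the bookkeeping around exposed vertices and the remark that an LP optimum is actually attained.
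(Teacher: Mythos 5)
Your proof is correct and is essentially the same argument as the paper's: the paper routes each inequality through a chain of intermediate linear programs ($a^*\geq b^*=c^*\geq d^*$ using P1--P4, and symmetrically P5--P6), and composing those steps yields exactly your transport maps $z\mapsto S_2 z$ and $x\mapsto S_2^t x$, with the same use of nonnegativity of $S_1$ to preserve the covering constraints and column-stochasticity of $S_2$ to preserve the objective. Your explicit handling of exposed vertices and attainment is a small tidiness bonus the paper leaves implicit, but it does not change the approach.
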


\begin{proof}
Since $G\cong_f H$, there exist doubly stochastic matrices $S_{1}$
and $S_{2}$ such that $S_{1}M_{G}=M_{H}S_{2}^{t}$ and
$M_{G}S_{2}=S_{1}^{t}M_{H}$. We write the following linear
programs:

P1: ``Minimize $a=\textbf{1}^{t}\cdot z$ subject to $M_{H}z\geq
\textbf{1}$ with $z\geq
0$''. \\

P2: ``Minimize $b=\textbf{1}^{t}\cdot w$ subject to
$S_{1}^{t}M_{H}w\geq \textbf{1}$ with
$w\geq 0$''. \\

P3: ``Minimize $c=\textbf{1}^{t}\cdot u$ subject to
$M_{G}S_{2}u\geq \textbf{1}$ with $u\geq
0$''. \\

P4: ``Minimize $d=\textbf{1}^{t}\cdot v$ subject to $M_{G}v\geq
\textbf{1}$ with $v\geq
0$''. \\

Let $a^*$, $b^*$, $c^*$, and $d^*$ be the optimal values of each
of the problems. It is straightforward that P2 and P3 are the same
problem using that $S_{1}^{t}M_{H}=M_{G}S_{2}$, thus $b^*=c^*$. On
the other hand, $a^*\geq b^*$ because every feasible solution of
P1 is a feasible solution of P2: if $z$ satisfies $M_{H}z\geq
\textbf{1}$, then $S_{1}^{t}(M_{H}z)\geq S_{1}^{t}\cdot
\textbf{1}=\textbf{1}$ ($S_{1}$ is a doubly stochastic matrix, and
in particular its entries are nonnegative). In the same way,
$c^*\geq d^*$, because for every solution $u$ of P3 we have a
feasible solution $v$ of P4 with the same objective function: if
$u$ satisfies $M_{G}S_{2}u\geq \textbf{1}$, then defining
$v=S_{2}u$, we obtain $M_{G}v\geq \textbf{1}$ and
$\textbf{1}^{t}\cdot v=\textbf{1}^{t}\cdot
S_{2}u=\textbf{1}^{t}\cdot u$. So, we have $a^*\geq b^*=c^*\geq
d^*$ and $a^*=k_{f}(H)$ and $d^*=k_{f}(G)$, then $k_{f}(H)\geq
k_{f}(G)$.

Using the same idea, we will write two additional linear programs: \\

P5: ``Minimize $e=\textbf{1}^{t}\cdot x$ subject to
$S_{1}M_{G}x\geq \textbf{1}$ with $x\geq
0$''. \\

P6: ``Minimize $f=\textbf{1}^{t}\cdot y$ subject to
$M_{H}S_{2}^{t}y\geq \textbf{1}$ with
$y\geq 0$''. \\

Let $e^*$ and $f^*$ be their optimal values. It holds that
$d^*\geq e^*=f^* \geq a^*$, so $k_{f}(G)\geq k_{f}(H)$. Therefore,
$k_{f}(H)=k_{f}(G)$.
\end{proof}

As a consequence, we have the following.

\begin{corollary}
\label{thm:frac-match-iso} Let $G$ and $H$ be hypergraphs. If
$G\cong_f H$, then $p_{f}(G)=p_{f}(H)$, $\mu_{f}(G)=\mu_{f}(H)$,
and $\tau_{f}(G)=\tau_{f}(H)$.
\end{corollary}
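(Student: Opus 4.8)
The plan is to derive Corollary~\ref{thm:frac-match-iso} directly from Theorem~\ref{thm:cover} together with the definitions of the fractional parameters given in this section. The key observation is that $p_f$, $\mu_f$, and $\tau_f$ were all \emph{defined} in terms of $k_f$ applied to a hypergraph built from the original one, so it suffices to check that each of these derived hypergraph constructions respects $\cong_f$.

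First I would handle $p_f$. By linear-programming duality we noted $p_f(G) = k_f(G)$ for every hypergraph $G$, so if $G \cong_f H$ then Theorem~\ref{thm:cover} gives $p_f(G) = k_f(G) = k_f(H) = p_f(H)$ immediately; no new construction is needed here. Next, for $\tau_f$, recall $\tau_f(H) = k_f(H^*)$. By Proposition~\ref{basics}(5), $G \cong_f H$ implies $G^* \cong_f H^*$, so applying Theorem~\ref{thm:cover} to the dual hypergraphs yields $\tau_f(G) = k_f(G^*) = k_f(H^*) = \tau_f(H)$. Finally, $\mu_f$ is defined by $\mu_f(H) = p_f(H^*)$; combining the two facts already used (duality $p_f = k_f$, and $G^* \cong_f H^*$ from Proposition~\ref{basics}), we get $\mu_f(G) = p_f(G^*) = k_f(G^*) = k_f(H^*) = p_f(H^*) = \mu_f(H)$.

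I do not expect a genuine obstacle here: the corollary is a bookkeeping consequence of Theorem~\ref{thm:cover}, the duality identity $p_f = k_f$, and the closure of $\cong_f$ under taking duals (Proposition~\ref{basics}(5)). The only point requiring a moment's care is the degenerate case where a hypergraph has an exposed vertex, so that some of these fractional parameters equal $\infty$: one should note that $G \cong_f H$ forces $M_G$ and $M_H$ to have the same dimensions and the same multiset of hyperedge sizes, and in fact the common equitable partition of Theorem~\ref{thm:frac-iso-hyper-main} shows that $G$ has an exposed vertex if and only if $H$ does (a class $V_i$ with $D_{ij} = 0$ for all $j$ transfers across), so the identities $p_f(G) = p_f(H)$ etc.\ remain valid, both sides being $\infty$ simultaneously. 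With that remark in place the proof is just the chain of equalities above.
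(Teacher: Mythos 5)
Your proposal is correct and follows essentially the same route as the paper: the chain $p_f = k_f$ via LP duality, Theorem~\ref{thm:cover} applied to $G,H$ and (via Proposition~\ref{basics}(5)) to $G^*,H^*$, and the definitions $\tau_f(H)=k_f(H^*)$, $\mu_f(H)=p_f(H^*)$. The extra remark about exposed vertices and infinite values is a harmless refinement the paper does not spell out.
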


\begin{proof}
By Theorem~\ref{thm:cover} and the equality of the parameters,
$p_{f}(G)=k_{f}(G)=k_{f}(H)=p_{f}(H)$. By
Proposition~\ref{basics}, $G^*\cong_f H^*$. So, the previous
equality holds also for the dual graphs, and
$p_{f}(G^*)=k_{f}(G^*)=k_{f}(H^*)=p_{f}(H^*)$. By definition,
$\mu_{f}(G)=\tau_{f}(G)=\tau_{f}(H)=\mu_{f}(H)$.
\end{proof}

In terms of graphs, we have the following.

\begin{corollary}
\label{thm:frac-match-iso-graphs} Let $G$ and $H$ be graphs. If
$G\cong_f H$, then they have the same fractional independence
number, fractional edge and vertex covering numbers, and
fractional matching number. In particular, $G$ has a perfect
fractional matching if and only if $H$ has a perfect fractional
matching.
\end{corollary}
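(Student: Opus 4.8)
The plan is to deduce everything from the hypergraph results already established, namely Theorem~\ref{thm:cover} and Corollary~\ref{thm:frac-match-iso}, together with the dictionary between graph invariants and hypergraph invariants set up earlier in this section. The first step is to note that a graph is precisely a $2$-uniform hypergraph, and that by Theorems~\ref{thm:congf-equiv} and~\ref{thm:equiv-congf} the relation $G\cong_f H$ for graphs coincides with the relation $G\cong_f H$ read for hypergraphs; hence every statement proved for hypergraphs applies to $G$ and $H$ without change.

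Second, I would simply read off the four equalities. Theorem~\ref{thm:cover} gives $k_f(G)=k_f(H)$, that is, the fractional edge cover numbers agree. Corollary~\ref{thm:frac-match-iso} gives $p_f(G)=p_f(H)$, $\mu_f(G)=\mu_f(H)$, and $\tau_f(G)=\tau_f(H)$. By the definitions recalled above, $\alpha_f(G)=p_f(G)$ is the fractional independence number, $\tau_f$ is the fractional vertex cover number, $k_f$ the fractional edge cover number, and $\mu_f$ the fractional matching number; so the four equalities are exactly the list of invariants asserted in the statement.

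Third, for the perfect fractional matching claim I would use that $G$ has a perfect fractional matching if and only if $\mu_f(G)=\frac{1}{2}|V(G)|$. Since $\mu_f(G)=\mu_f(H)$ by the above, and since $|V(G)|=|V(H)|$ by Proposition~\ref{basics}(1), the defining equality holds for $G$ if and only if it holds for $H$, which gives the equivalence.

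The only point that needs care — and, I expect, the only place a careful reader might push back — is the degenerate case in which a graph has an isolated vertex (or, as a corner case, no edges at all): then that vertex is \emph{exposed} in the hypergraph sense and $k_f$ and $p_f$ take the value $\infty$. This causes no difficulty, because fractional isomorphism preserves the degree sequence by Proposition~\ref{basics}(3), so $G$ has an isolated vertex if and only if $H$ does; consequently the relevant parameters are infinite on both sides simultaneously, and the asserted equalities still hold in the extended reals. Apart from this bookkeeping the corollary is a direct translation of the hypergraph theorems through the definitions, so there is no substantial obstacle to overcome.
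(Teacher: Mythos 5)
Your proposal is correct and matches the paper's treatment: the paper states this corollary without a separate proof, as an immediate translation of Theorem~\ref{thm:cover} and Corollary~\ref{thm:frac-match-iso} through the identifications $\alpha_f=p_f$, vertex cover $=\tau_f$, edge cover $=k_f$, matching $=\mu_f$, plus $|V(G)|=|V(H)|$ for the perfect-matching claim. Your extra remark on the isolated-vertex/exposed-vertex case is sound bookkeeping and does not change the argument.
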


A sufficient condition for a graph to admit a perfect fractional
matching is the following. If the coarsest equitable partition $P=
V_1, \dots, V_k$ of a graph $G$ is such that for every $1 \leq i
\leq k$, $G[V_i]$ is $r_i$-regular with $r_i > 0$, then $G$ has a
perfect fractional matching. Namely, we assign a value
$\frac{1}{r_i}$ to every internal edge of $V_i$, and $0$ to the
edges that joint different parts of $P$.

Concerning the fractional independence number, there are other
ways of defining a fractional version of the independence number
of a graph. For example, relaxing the clique formulation of
it~\cite{Chvatal75}. That is, the fractional packing number of the
hypergraph whose vertices are the vertices of the graph and whose
hyperedges are the cliques of the graph. We will denote this
fractional relaxation of the independence number of a graph $G$ by
$\alpha^c_f(G)$. The dual problem for this formulation is the
(fractional) \emph{clique cover} of a graph (denoted by
$\theta(G)$, resp. $\theta_f(G)$), instead of the (fractional)
edge cover.

Notice that for the disjoint union of two triangles, $2 =
\alpha(2C_3) \leq \alpha^c_f(2C_3) = \theta_f(2C_3) \leq
\theta(2C_3) = 2$, so $\alpha^c_f(2C_3) = \theta_f(2C_3) = 2$, and
for the cycle of length six, $3 = \alpha(C_6) \leq \alpha^c_f(C_6)
= \theta_f(C_6) \leq \theta(C_6) = 3$, so $\alpha^c_f(C_6) =
\theta_f(C_6) = 3$. Therefore, the fractional clique cover and the
fractional relaxation of the clique formulation of the
independence number are not invariant under fractional graph
isomorphism.

Also, for the disjoint union of two triangles, $3 = \omega(2C_3)
\leq \omega_f(2C_3) = \chi_f(2C_3) \leq \chi(2C_3) = 3$, so
$\omega_f(2C_3) = \chi_f(2C_3) = 3$, and for the cycle of length
six, $2 = \omega(C_6) \leq \omega_f(C_6) = \chi_f(C_6) \leq
\chi(C_6) = 2$, so $\omega_f(C_6) = \chi_f(C_6) = 2$. Therefore,
the fractional chromatic and clique numbers are not invariant
under fractional graph isomorphism.

The domination number and the total domination number of a graph
can be viewed also as the covering numbers of associated
hypergraphs. Given a graph $G$, let $H_{N(G)}$ be the hypergraph
whose vertex set is $V(G)$ and whose hyperedges are the open
neighborhoods of the vertices of $G$. Then it is not difficult to
see that $\Gamma(G) = k(H_{N(G)})$. Alternatively, if we consider
the hypergraph $H_{N[G]}$, whose hyperedges are the closed
neighborhoods of the vertices of $G$, then $\gamma(G) =
k(H_{N[G]})$. The fractional domination number and fractional
total domination number are defined, as in other cases, as
$\gamma_f(G) = k(H_{N[G]})$ and $\Gamma_f(G) = k_f(H_{N(G)})$, and
this coincides with other ways of defining fractional
domination~\cite{U-S-fgt}.

Domke proved the following result.

\begin{theorem}\cite{Dom-thesis}\label{thm:fdom-reg} If $G$ has $n$ vertices and is $k$-regular, then $\gamma_f(G)
= \frac{n}{k + 1}$ and $\Gamma_f(G) = \frac{n}{k}$.
\end{theorem}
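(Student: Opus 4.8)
The plan is to prove both equalities in one stroke, exploiting that the uniform function is simultaneously a fractional cover of the relevant hypergraph and, read as a vertex weighting of that same hypergraph, a fractional packing, so that linear-programming duality pins the common optimal value. Throughout I assume $k\geq 1$ (if $k=0$ then $G$ has isolated vertices, $H_{N(G)}$ has an exposed vertex, and $\Gamma(G)$ and the statement are vacuous). First I would note that, since the hyperedges of $H_{N[G]}$ (resp.\ $H_{N(G)}$) are indexed by the vertices of $G$, the corresponding vertex-hyperedge incidence matrix is an $n\times n$ matrix; and because a vertex $v$ lies in the closed neighborhood $N[u]$ iff $uv\in E(G)$ or $u=v$, and in the open neighborhood $N(u)$ iff $uv\in E(G)$, this matrix is exactly $A_G+I_{n}$ for $H_{N[G]}$ and exactly $A_G$ for $H_{N(G)}$. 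Hence, by definition, $\gamma_f(G)=k_f(H_{N[G]})$ is the value of ``minimize $\textbf{1}^{t}x$ subject to $(A_G+I_{n})x\geq\textbf{1}$, $x\geq 0$'', and $\Gamma_f(G)=k_f(H_{N(G)})$ is the value of ``minimize $\textbf{1}^{t}x$ subject to $A_Gx\geq\textbf{1}$, $x\geq 0$''. Since $k\geq 1$, neither hypergraph has an exposed vertex, so both optima are finite and, by LP duality, $k_f(H)=p_f(H)$ for each of them.

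For the upper bounds I would simply substitute the uniform vector, using that $k$-regularity means $A_G\cdot\textbf{1}=k\cdot\textbf{1}$. The vector $x=\frac{1}{k+1}\textbf{1}$ satisfies $(A_G+I_{n})x=\frac{1}{k+1}(k+1)\textbf{1}=\textbf{1}\geq\textbf{1}$, so it is feasible for the covering program of $H_{N[G]}$ and gives $\gamma_f(G)\leq\textbf{1}^{t}x=\frac{n}{k+1}$; likewise $x=\frac{1}{k}\textbf{1}$ gives $A_Gx=\textbf{1}\geq\textbf{1}$, hence $\Gamma_f(G)\leq\frac{n}{k}$. For the matching lower bounds I would use the same uniform vector in the dual (packing) programs: $y=\frac{1}{k+1}\textbf{1}$ satisfies $(A_G+I_{n})y=\textbf{1}\leq\textbf{1}$, so $p_f(H_{N[G]})\geq\textbf{1}^{t}y=\frac{n}{k+1}$; and $y=\frac{1}{k}\textbf{1}$ satisfies $A_Gy=\textbf{1}\leq\textbf{1}$, so $p_f(H_{N(G)})\geq\frac{n}{k}$.

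Combining the two directions via $k_f(H)=p_f(H)$ then yields $\frac{n}{k+1}\leq p_f(H_{N[G]})=k_f(H_{N[G]})=\gamma_f(G)\leq\frac{n}{k+1}$, and similarly $\frac{n}{k}\leq\Gamma_f(G)\leq\frac{n}{k}$, which are the claimed equalities. There is no genuinely hard step here; the only points requiring care are the degenerate case $k=0$ and making explicit the identification of the incidence matrices of $H_{N[G]}$ and $H_{N(G)}$ with $A_G+I_{n}$ and $A_G$, so that ``$k$-regular'' translates directly into the row-sum identity $A_G\cdot\textbf{1}=k\cdot\textbf{1}$ that drives every computation.
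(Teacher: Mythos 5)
Your proof is correct. Note, however, that the paper does not prove this theorem at all: it is imported verbatim from Domke's thesis (\cite{Dom-thesis}) and used as a black box, so there is no in-paper argument to compare yours against. Your argument is the standard one and fits the paper's framework exactly: you correctly identify the incidence matrices of $H_{N[G]}$ and $H_{N(G)}$ with $A_G+I_n$ and $A_G$ (an identification the paper itself makes later, in the proof of Theorem~\ref{thm:fdom-iso}), observe that these matrices are symmetric so the covering and packing programs share the same constraint matrix, and then sandwich the common optimum between the value $\tfrac{n}{k+1}$ (resp.\ $\tfrac{n}{k}$) of the uniform feasible point for each program via the duality $k_f=p_f$ that the paper has already recorded. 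The only cosmetic slip is the claim that the statement is entirely vacuous for $k=0$: the $\Gamma_f$ assertion indeed degenerates (exposed vertices, $\Gamma_f(G)=\infty$), but the $\gamma_f$ assertion still holds there and is in fact covered by your own computation with $x=\textbf{1}$; this does not affect the validity of the proof for $k\geq 1$.
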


In particular, two $n$-vertex $k$-regular graphs have the same
fractional domination and total domination numbers. We generalize
this to fractionally isomorphic graphs.

\begin{theorem}\label{thm:fdom-iso} Let $G$ and $H$ be fractionally isomorphic graphs. Then $\gamma_f(G)
= \gamma_f(H)$ and $\Gamma_f(G) = \Gamma_f(H)$.
\end{theorem}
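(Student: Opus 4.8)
The plan is to reduce the statement to Theorem~\ref{thm:cover} (and Corollary~\ref{thm:frac-match-iso}) by showing that the auxiliary hypergraphs $H_{N[G]}$ and $H_{N(G)}$ are themselves fractionally isomorphic to $H_{N[H]}$ and $H_{N(H)}$, respectively, whenever $G\cong_f H$. Recall that by definition $\gamma_f(G)=k_f(H_{N[G]})$ and $\Gamma_f(G)=k_f(H_{N(G)})$, so once we know $H_{N[G]}\cong_f H_{N[H]}$ and $H_{N(G)}\cong_f H_{N(H)}$, Theorem~\ref{thm:cover} gives the two desired equalities immediately.

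First I would work with the closed-neighborhood case. Note that the vertex-hyperedge incidence matrix of $H_{N[G]}$ is exactly $A_G+I_n$, since the hyperedge indexed by $v$ is $N[v]=N(v)\cup\{v\}$, and the hyperedges are naturally indexed by the vertices of $G$. Now suppose $G\cong_f H$, witnessed by a doubly stochastic matrix $S$ with $A_G S=S A_H$. Then $(A_G+I_n)S=A_G S+S=S A_H+S=S(A_H+I_n)$, so the \emph{same} matrix $S$ is a fractional isomorphism between the graphs/hypergraphs whose incidence matrices are $A_G+I_n$ and $A_H+I_n$. To phrase this in terms of the relation $\equiv$ of the paper, I would set $M_{H_{N[G]}}=A_G+I_n=M$, $M_{H_{N[H]}}=A_H+I_n=M'$, $S_1=S$ and $S_2=S^t$; the identity $A_G S=S A_H$ gives $S_1 M=M' S_2^t$ after checking that $S M' = S(A_H+I_n)=(A_G+I_n)S=M S^t{}^t$... more carefully, one needs both equations $S_1 M = M' S_2^{t}$ and $M S_2 = S_1^{t} M'$; since $A_G$ and $A_H$ are symmetric and $S_2^t=S$, both reduce to $A_G S=S A_H$ together with its transpose $S^t A_G = A_H S^t$, which follows by transposing. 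Hence $H_{N[G]}\cong_f H_{N[H]}$ and $\gamma_f(G)=k_f(H_{N[G]})=k_f(H_{N[H]})=\gamma_f(H)$ by Theorem~\ref{thm:cover}.

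For the open-neighborhood case the incidence matrix of $H_{N(G)}$ is simply $A_G$ itself (the hyperedge indexed by $v$ is $N(v)$), so $G\cong_f H$ already says, via symmetry of $A_G,A_H$ and the argument above, that $H_{N(G)}\cong_f H_{N(H)}$; then $\Gamma_f(G)=k_f(H_{N(G)})=k_f(H_{N(H)})=\Gamma_f(H)$. The only genuine subtlety — and the step I expect to require the most care — is the bookkeeping between the two equivalent formulations of fractional isomorphism: the paper's relation $\equiv$ on incidence matrices uses \emph{two} doubly stochastic matrices $S_1,S_2$ and the pair of equations $S_1M_G=M_H S_2^t$, $M_GS_2=S_1^tM_H$, whereas the classical definition $A_GS=SA_H$ uses one matrix and an adjacency matrix. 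One must verify that for these particular hypergraphs (whose incidence matrices are symmetric, being $A_G$ or $A_G+I$) the single-matrix condition does produce a valid pair $(S_1,S_2)=(S,S^t)$ satisfying both incidence-matrix equations; this is where transposing the relation and using symmetry of the adjacency matrices is essential. Everything else is a direct application of Theorem~\ref{thm:cover}.
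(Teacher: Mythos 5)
Your proposal follows essentially the same route as the paper: identify the incidence matrices of $H_{N(G)}$ and $H_{N[G]}$ with $A_G$ and $A_G+I$, use symmetry plus transposition of $A_GS=SA_H$ to produce the pair $(S_1,S_2)$ witnessing $H_{N(G)}\cong_f H_{N(H)}$ and $H_{N[G]}\cong_f H_{N[H]}$, and then invoke Theorem~\ref{thm:cover}. Two small fixes: your labeling should be $(S_1,S_2)=(S^t,S)$ rather than $(S,S^t)$ --- with symmetric $M=A$, $M'=B$ the equations $S_1M=M'S_2^t$ and $MS_2=S_1^tM'$ become $S^tA=BS^t$ and $AS=SB$, which are exactly $A_GS=SA_H$ and its transpose, whereas your choice would require $SA=BS$, which does not follow; and you should dispose separately of the case where $G$ (hence $H$) has an isolated vertex, since then $\Gamma_f(G)=\Gamma_f(H)=\infty$ and the covering LP for $H_{N(G)}$ is infeasible.
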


\begin{proof}
Let $A$ and $B$ be square symmetric matrices such that there
exists a double stochastic matrix $S$ with $AS = SB$. Then,
defining $S_1 = S^t$, $S_2 = S$, it holds  $S_1A = S^t A = (A^t
S)^t = (AS)^t = (SB)^t = B^t S^t =  BS^t = BS_2^t$, and $AS_2 = AS
= SB = S_1^t B$. Notice that if $G$ (and thus $H$) has an isolated
vertex, then $\Gamma_f(G) = \Gamma_f(H) = \infty$. Otherwise, the
vertex-hyperedge incidence matrix of the hypergraph $H_{N(G)}$ of
$G$ is exactly its adjacency matrix $A_G$, and the same holds for
$H$. In particular, these are symmetric matrices. Since $G\cong_f
H$, then there exists a double stochastic matrix $S$ with $A_G S =
SA_H$. By the observation above, there exist double stochastic
matrices $S_1$ and $S_2$ such that $S_1 A_G = A_H S_2^t$, and $A_G
S_2 = S_1^t A_H$. Thus, $H_{N(G)}\cong_f H_{N(H)}$ and by
Theorem~\ref{thm:cover}, $\Gamma_f(G) = k_f(H_{N(G)}) =
k_f(H_{N(H)}) = \Gamma_f(H)$.

Similarly, independently of the existence of isolated vertices,
the vertex-hyperedge incidence matrix of the hypergraph $H_{N[G]}$
of $G$ is $A_G + I$, where $I$ is the identity matrix of the
appropriate dimension (and the same for $H$). These are also
symmetric matrices, and we have
\begin{eqnarray*}
A_G S & = & S A_H\\
A_G S + S & = & S A_H + S\\
(A_G+I) S & = & S (A_H+I)\\
\end{eqnarray*}
As for the previous case, this implies $H_{N[G]}\cong_f H_{N[H]}$
and by Theorem~\ref{thm:cover}, $\gamma_f(G) = k_f(H_{N[G]}) =
k_f(H_{N[H]}) = \gamma_f(H)$.
\end{proof}

\section{Conclusions}

The computational complexity of the recognition problem of graph
isomorphism is an open question. It leads to define relaxations of
the usual isomorphism
 that can be computed efficiently, in order to discard negative instances. One of
these relaxations is the fractional isomorphism. Two graphs $G$
and $H$ are fractionally isomorphic if there exists a double
stochastic matrix $S$ such that $AS=SB$, where $A$ and $B$ are the
adjacency matrices of $G$ and $H$, respectively. The recognition
problem of fractional isomorphism has polynomial complexity, as it
can be modeled by linear programming, which is
polynomial~\cite{Kha-lp}.

A question this work tried to answer is whether there exists a
relationship between the fractional graph isomorphism and some
fractional parameters, such as fractional packing and covering.

To compute these parameters, it is necessary to work with the
vertex-edge incidence matrix instead of the adjacency matrix. So,
it is natural to ask how to describe the fractional graph
isomorphism in terms of these matrices. We obtain that $G$ and $H$
are fractionally isomorphic if and only if there exist doubly
stochastic matrices $S_{1}$ and $S_{2}$ such that
$S_{1}M_{G}=M_{H}S_{2}^{t}$ and $M_{G}S_{2} =S_{1}^{t}M_{H}$, with
$M_{G}$ and $M_{H}$ the vertex-edge incidence matrices of $G$ and
$H$, respectively. This later definition can be extended to
hypergraphs, and with this definition, the recognition of the
fractional hypergraph isomorphism has polynomial complexity as
well. We showed that the fractional covering and packing number,
and the fractional matching and transversal number are invariants
of the fractional hypergraph isomorphism. In particular, for
graphs, the fractional edge cover and independence number, and the
fractional matching and vertex cover number are invariants of the
fractional isomorphism. Moreover, the fractional domination and
total domination are invariants of the fractional graph
isomorphism. This is not the case of fractional chromatic, clique,
and clique cover numbers. In this way, most of the classical
fractional parameters are classified with respect to their
invariance under fractional graph isomorphism.

\bigskip

\noindent \textbf{Acknowledgements:} This work was partially
supported by UBACyT Grant 20020130100808BA (Argentina). We thank
Gianpaolo Oriolo and Paolo Ventura for introducing us into this
topic, and Martin Milanic for an interesting question about the
relation with the fractional isomorphism of the bipartite graphs
associated to the hypergraphs.

%\bibliographystyle{plain}
%\bibliography{bnm-j,bnm}

\end{document}